\theoremstyle{thmstyleone}%
\newtheorem{theorem}{Theorem}
\theoremstyle{thmstyletwo}%
\newtheorem{Example}{Example}%
\newtheorem{Remark}{Remark}%
\newtheorem{Lemma}{Lemma}
\newtheorem{Corollary}{Corollary}
\newtheorem{Proposition}{Proposition}
\theoremstyle{thmstylethree}%
\newtheorem{Definition}{Definition}%
\begin{document}

\title[On some generalized geometric constants with two parameters in Banach spaces]{On some generalized geometric constants with two parameters in Banach spaces}


\author[1]{\fnm{Yuxin} \sur{Wang}}\email{y24060028@stu.aqnu.edu.cn}
\equalcont{These authors contributed equally to this work.}

\author*[1,2]{\fnm{Qi} \sur{Liu}}\email{liuq67@aqnu.edu.cn}
\equalcont{These authors contributed equally to this work.}

\author[1]{\fnm{Haoyu} \sur{Zhou}}\email{061024028@stu.aqnu.edu.cn}
\equalcont{These authors contributed equally to this work.}

\author[1]{\fnm{Jinyu} \sur{Xia}}\email{y23060036@stu.aqun.edu.cn}
\equalcont{These authors contributed equally to this work.}

	\author[3]{\fnm{Muhammad} \sur{Toseef}}\email{toseefrana95@email.com}
\equalcont{These authors contributed equally to this work.}

\affil*[1]{\orgdiv{School of Mathematics and Physics}, \orgname{Anqing Normal University}, \orgaddress{\city{Anqing}, \postcode{246133}, \country{China}}}

\affil[2]{\orgdiv{International Joint Research Center of Simulation and Control for
		Population Ecology of Yantze River in Anhui}, \orgname{Anqing Normal University}, \orgaddress{\city{Anqing}, \postcode{246133},  \country{China}}}

\affil[3]{\orgdiv{Ministry of Education Key Laboratory for NSLSCS, School of Mathematical Sciences}, \orgname{Nanjing Normal University}, \orgaddress{\city{Nanjing}, \postcode{210023},  \country{China}}}


\abstract{	In this paper, we build upon the $\mathcal{T_X}$ constant that was introduced by Alonso and Llorens-Fuster in 2008. Through the incorporation of suitable parameters, we have successfully generalized the aforementioned constant into two novel forms of geometric constants, which are denoted as $\mathcal{T}_1(\lambda,\mu,\mathcal{X})$ and $\mathcal{T}_2(\kappa,\tau,\mathcal{X})$. First, we obtained some basic properties of these two constants, such as the upper and lower bounds. Next, these two constants served as the basis for our characterization of Hilbert spaces.  More significantly, our findings reveal that these two constants exhibit a profound and intricate interrelation with other well-known constants in Banach spaces. Finally, we characterized uniformly non-square spaces by means of these two constants.  }

\keywords{geometry of normed spaces, geometric constants, uniformly non-square, normal structure }


\pacs[MSC Classification]{46B20}

\maketitle

\section{Introduction and preliminaries}
In the present article, the following assumptions are made: suppose \(\mathcal{X}\) is a real Banach space satisfying \(\operatorname{dim}(\mathcal{X})\geq2\). The notations \(\mathcal{B(X)}\) and \(\mathcal{S(X)}\) are employed to denote the unit ball and the unit sphere of the Banach space \(\mathcal{X}\), and $\wedge$ and $\vee$ denote the minimum and maximum values of the two, respectively.   

The geometric theory of Banach spaces relies heavily on geometric constants, which are indispensable for the analysis and comprehension of space properties. In the contemporary academic landscape, a plethora of studies have emerged, delving into diverse geometric constants of the Banach space $\mathcal{X}$.

As per the definition in \cite{11},  a Banach space \(\mathcal{X}\) is classified as uniformly non-square when there exists a \(\delta\in(0,1)\) with the following property: for all \(x_1,x_2\in\mathcal{S(X)}\),  the inequality \(\left\|\frac{x_1 + x_2}{2}\right\|\wedge\left\|\frac{x_1 - x_2}{2}\right\|\leq 1-\delta\) holds.  The constant $$\mathcal{J(X)}=\sup\left\{\|x_1 + x_2\|\wedge\|x_1 - x_2\|:x_1,x_2\in \mathcal{S(X)}\right\}$$ is known as the non-square or James constant \cite{15} of \(\mathcal{X}\). 

In Clarkson's work \cite{13}, he introduced the von Neumann-Jordan constant $\mathcal{C_{NJ}(X)}$. Through ingenious argumentation, he indirectly transformed the parallelogram law into the geometric constant form: 
$$\mathcal{C_{NJ}(X)}=\sup\left\{\frac{\|x_1 + x_2\|^2+\|x_1 - x_2\|^2}{2\|x_1\|^2 + 2\|x_2\|^2}:x_1,x_2\in\mathcal{X},\|x_1\|^2+\|x_2\|^2\neq0\right\}.$$ 

Later, in 2008, Alonso et al. \cite{14} restricted the aforementioned $\mathcal{C_{NJ}(X)}$  constant to the unit sphere and defined the following $\mathcal{C'_{NJ}(X)}$ constant:
$$\mathcal{C'_{NJ}(X)}=\sup\left\{\frac{\|x_1+x_2\|^2+\|x_1-x_2\|^2}{4}:x_1,x_2\in\mathcal{S(X)}\right\}.$$

The authors \cite{16} considers the triangles with vertices \(x_1\), \(-x_1\) and \(x_2\), where \(x_1, x_2\in\mathcal{S}(X)\). By making use of the arithmetic means of the variable lengths of the sides of these triangles, the following constants for Banach spaces are defined:  
$$\mathcal{A}_2(\mathcal{X})=\sup_{x_1,x_2\in \mathcal{S}(X)}\frac{\|x_1+x_2\|+\|x_1-x_2\|}{2}.$$

Later, some scholars \cite{19} through the study of the above $\mathcal{A}_2(\mathcal{X})$ constant, have generalized it into the following form: for $\kappa,\tau>0$, $$\mathcal{A}_{\kappa-\tau}(\mathcal{X})=\sup\left\{\frac{\|\kappa x_1+\tau x_2\|+\|\tau x_1-\kappa x_2\|}{2}:x_1,x_2\in \mathcal{S(X)}\right\}.$$

Similarly, the authors \cite{alonso2008geometric}  considers the triangles with vertices \(x_1\), \(-x_1\) and \(x_2\), where \(x_1, x_2\in\mathcal{S}(X)\).  By utilizing the geometric means of the variable lengths of the sides of these triangles, the following constant for Banach spaces is defined:
$$\mathcal{T}(\mathcal{X})=\sup\limits_{x_1,x_2\in \mathcal{S(X)}}\bigg(\|x_1+x_2\|\|x_1-x_2\|\bigg)^\frac{1}{2}.$$
We collect some properties of the  $\mathcal{T}(\mathcal{X})$ constant:

(i) $\sqrt{2}\leq \mathcal{T}(\mathcal{X})\leq2$.

(ii) If $\mathcal{X}$ is a Hilbert space, then $\mathcal{T}(\mathcal{X})=\sqrt{2}$.

(iii) $\mathcal{X}$ is uniformly non-square if and only if $\mathcal{T}(\mathcal{X})<2$.

In this article, through the research and analysis of the $\mathcal{T}(\mathcal{X})$ constant , we have generalized it in two forms and denoted them as $\mathcal{T}_1(\kappa,\tau,\mathcal{X})$ and $\mathcal{T}_2(\kappa,\tau,\mathcal{X})$ respectively. 

In section \ref*{s3}, we mainly introduced the constant $\mathcal{T}_1(\kappa,\tau,\mathcal{X})$. Firstly, we gave the equivalent definition of the constant. Then, we provided its upper and lower bounds, and calculated the value of the constant in the $l_p$ space with $p > 2$. Since $\mathcal{T}_1(\kappa,\tau,\mathcal{X})$ is a generalization of $\mathcal{T}(\mathcal{X})$, we compared the relationship between them. Finally, we characterized the non-uniformly non-square Banach spaces by using the constant $\mathcal{T}_1(\kappa,\tau,\mathcal{X})$. 

In section \ref*{s2}, our research reveals that this particular constant serves as an effective means to describe the properties of Hilbert spaces. Initially, we determined the upper and lower limits of the constant, as well as its specific value within a given Banach space. Subsequently, we explored and analyzed its correlations with several well-known classical constants, including the convexity modulus $\delta_\mathcal{X}(\varepsilon)$, $\mathcal{C'_{NJ}(X)}$, and $\mathcal{T}(\mathcal{X})$. In the final stage of our study, we exploited this constant to define and distinguish uniformly non-square spaces, as well as Banach spaces that exhibit normal structure.

\section{A new constant $\mathcal{T}_1(\kappa,\tau,\mathcal{X})$ }
In this section, we propose the first generalization of the $\mathcal{T}(\mathcal{X})$ constant and denoted the new constant as $\mathcal{T}_1(\kappa,\tau,\mathcal{X})$. Firstly, we demonstrate the form of the $\mathcal{T}_1(\kappa,\tau,\mathcal{X})$ constant: 
\begin{Definition}
	We define the following  constant: for $\kappa,\tau>0$, $$\mathcal{T}_1(\kappa,\tau,\mathcal{X})=\sup\limits_{x_1,x_2\in \mathcal{S(X)}}\bigg(\|\kappa x_1+\tau x_2\|\|\kappa x_1-\tau x_2\|\bigg)^\frac{1}{2}.$$
\end{Definition}
\begin{Remark}
	$\mathcal{T}_1(1,1,\mathcal{X})=\mathcal{T}(\mathcal{X})=\sup\limits_{x_1,x_2\in \mathcal{S(X)}}\bigg(\|x_1+x_2\|\|x_1-x_2\|\bigg)^\frac{1}{2}.$
\end{Remark}
We will commence by presenting an equivalent definition of a constant. Nevertheless, the following crucial lemma is indispensable for our purpose.  

\begin{Lemma}\label{l2}
	Let $\mathcal{X}$ be a Banach space. For any $x_1 \in \mathcal{X}$ and $t \in \mathbb{R}^+$, the function $\psi_{x_1}(t) = \mathop{\sup}\limits_{x_2 \in \mathcal{S(X)}} \|\kappa x_1 +\tau tx_2\|\|\kappa x_1 -\tau tx_2\|$ is an increasing function.
\end{Lemma}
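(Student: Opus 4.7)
I aim to show $\psi_{x_1}(s) \leq \psi_{x_1}(t)$ for every $0 < s < t$, giving the non-decreasing property. The strategy hinges on the convexity of the norm. First, for any $x_2 \in \mathcal{S(X)}$ I would use the identities
\[
\kappa x_1 \pm \tau s x_2 \;=\; \tfrac{t \pm s}{2t}(\kappa x_1 + \tau t x_2) + \tfrac{t \mp s}{2t}(\kappa x_1 - \tau t x_2)
\]
to express $\kappa x_1 \pm \tau s x_2$ as convex combinations of $\kappa x_1 \pm \tau t x_2$, and apply the triangle inequality to bound each $\|\kappa x_1 \pm \tau s x_2\|$ by the corresponding convex combination of $A := \|\kappa x_1 + \tau t x_2\|$ and $B := \|\kappa x_1 - \tau t x_2\|$.

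Second, multiplying the two upper bounds and invoking the algebraic identity $(pA+qB)(qA+pB) = AB + pq(A-B)^2$, with $p = (t+s)/(2t)$ and $q = (t-s)/(2t)$, yields the pointwise estimate
\[
\|\kappa x_1 + \tau s x_2\|\,\|\kappa x_1 - \tau s x_2\| \;\leq\; AB + \frac{t^2-s^2}{4t^2}(A-B)^2,
\]
in which the first term is at most $\psi_{x_1}(t)$, while the second is a nonnegative correction that must be absorbed before taking the supremum.

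Third, I would take the supremum over $x_2 \in \mathcal{S(X)}$. My plan is to exploit $\operatorname{dim}(\mathcal{X}) \geq 2$: the unit sphere is path-connected, so the continuous odd map $x_2 \mapsto A(x_2) - B(x_2)$ vanishes somewhere on $\mathcal{S(X)}$ by the intermediate value theorem, killing the correction along such balanced directions. The main obstacle will be extending this from the balanced directions to the full supremum; I expect this to require reformulating $\psi_{x_1}(t)$ as a supremum over the unit ball $\mathcal{B(X)}$ (using $y \mapsto (s/t)\,y$ to obtain monotonicity of the enlarged supremum for free) and then pushing any interior maximizer out to $\mathcal{S(X)}$ via the convexity of each norm factor along radial lines.
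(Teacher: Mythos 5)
Your algebra in the first two steps is correct: the convex decomposition of $\kappa x_1\pm\tau s x_2$ in terms of $\kappa x_1\pm\tau t x_2$ and the identity $(pA+qB)(qA+pB)=AB+pq(A-B)^2$ both check out. But this is exactly where the approach runs aground: the correction term $pq(A-B)^2$ is \emph{nonnegative}, so the pointwise estimate reads $\|\kappa x_1+\tau s x_2\|\,\|\kappa x_1-\tau s x_2\|\le AB+pq(A-B)^2$, which is weaker than what you need ($\le AB$, or at least $\le\psi_{x_1}(t)$ after taking suprema). Nothing in the remainder of your plan actually absorbs it. The intermediate-value argument produces one balanced direction $z_0$ with $A(z_0)=B(z_0)$, but the supremum defining $\psi_{x_1}(s)$ ranges over all $x_2$, and there is no reason the (near-)maximizing directions for parameter $s$ are balanced for parameter $t$. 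The final fallback --- pass to a supremum over $\mathcal{B(X)}$ and push an interior maximizer out to $\mathcal{S(X)}$ ``via the convexity of each norm factor along radial lines'' --- fails concretely: convexity of each factor does not make the \emph{product} radially monotone. In a Hilbert space with $x_2=x_1/\|x_1\|$ one gets $\|\kappa x_1+\tau r x_2\|\,\|\kappa x_1-\tau r x_2\|=\bigl|\kappa^2\|x_1\|^2-\tau^2r^2\bigr|$, which strictly decreases on $\bigl[0,\kappa\|x_1\|/\tau\bigr]$; so moving radially outward can decrease the product. (There is also a structural circularity: in this paper the ball-versus-sphere reformulation, Proposition 1, is \emph{deduced from} Lemma \ref{l2}, so you cannot invoke it here without an independent proof.)

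For comparison, the paper does not prove the monotonicity from scratch at all: it factors out $\kappa^2$ to reduce $\psi_{x_1}$ to $\kappa^2\sup_{x_2\in\mathcal{S(X)}}\|x_1+\frac{\tau}{\kappa}t x_2\|\,\|x_1-\frac{\tau}{\kappa}t x_2\|$ and cites Lemma 21 of Alonso and Llorens-Fuster \cite{alonso2008geometric}. If you want a self-contained argument, the workable idea is not radial pushing but pushing in a well-chosen \emph{non-radial} direction: given $y=\tau s x_2$ with $\|y\|<\tau t$, pick norming functionals $f$ of $\kappa x_1+y$ and $g$ of $\kappa x_1-y$ and a nonzero $v$ with $f(v)\ge 0\ge g(v)$ (possible since $\dim\mathcal{X}\ge 2$); then both convex functions $\rho\mapsto\|\kappa x_1+y+\rho v\|$ and $\rho\mapsto\|\kappa x_1-y-\rho v\|$ are nondecreasing for $\rho\ge 0$, so their product does not decrease as you slide $y+\rho v$ out to the sphere of radius $\tau t$. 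As written, your proposal has a genuine gap at the absorption step.
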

\begin{proof}
	Since $$\sup_{x_2 \in \mathcal{S(X)}} \|\kappa x_1 +\tau tx_2\|\|\kappa x_1 -\tau tx_2\|=\kappa^2\sup_{x_2 \in \mathcal{S(X)}} \|x_1 +\frac{\tau}{\kappa} tx_2\|\|x_1 -\frac{\tau}{\kappa} tx_2\|,$$ then according to a method similar to Lemma 21 in \cite{alonso2008geometric}, we can easily obtain this result.
\end{proof}
\begin{Proposition}
	Let $\mathcal{X}$ be a Banach space. Then,
	
	(i)For any $x_1\in \mathcal{X}$,
	$$\sup\limits_{x_2\in \mathcal{B(X)}}\bigg(\|\kappa x_1+\tau x_2\|\|\kappa x_1-\tau x_2\|\bigg)^\frac{1}{2}=\sup\limits_{x_2\in \mathcal{S(X)}}\bigg(\|\kappa x_1+\tau x_2\|\|\kappa x_1-\tau x_2\|\bigg)^\frac{1}{2}.$$
	
	(ii)$$\begin{aligned}\mathcal{T}_1(\kappa,\tau,\mathcal{X})&=\sup_{x_1\in \mathcal{S(X)}}\sup_{x_2\in\mathcal{B(X)}}\bigg(\|\kappa x_1+\tau x_2\|\|\kappa x_1-\tau x_2\|\bigg)^\frac{1}{2}\\&=\sup_{x_1\in \mathcal{B(X)}}\sup_{x_2\in \mathcal{B(X)}}\bigg(\|\kappa x_1+\tau x_2\|\|\kappa x_1-\tau x_2\|\bigg)^\frac{1}{2}.\end{aligned}$$
\end{Proposition}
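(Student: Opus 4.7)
The plan is to prove part (i) first as the main ingredient, then derive part (ii) from it by a Fubini-style swap of suprema combined with a symmetry observation that lets me reapply part (i) to the ``other'' variable.

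For part (i), one direction $\sup_{x_2\in\mathcal{S(X)}}\leq\sup_{x_2\in\mathcal{B(X)}}$ is immediate from $\mathcal{S(X)}\subseteq\mathcal{B(X)}$. For the reverse inequality I would use Lemma \ref{l2} directly: given any nonzero $x_2\in\mathcal{B(X)}$, set $t=\|x_2\|\in(0,1]$ and $y=x_2/\|x_2\|\in\mathcal{S(X)}$, so that
\[
\|\kappa x_1+\tau x_2\|\,\|\kappa x_1-\tau x_2\|=\|\kappa x_1+\tau t y\|\,\|\kappa x_1-\tau t y\|\leq\psi_{x_1}(t)\leq\psi_{x_1}(1),
\]
and $\psi_{x_1}(1)$ is exactly the square of the right-hand side. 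The trivial case $x_2=0$ gives value $\kappa^2\|x_1\|^2$, which is dominated by $\psi_{x_1}(1)$ either by continuity of $\psi_{x_1}$ at $0$ combined with monotonicity, or directly by choosing $y\in\mathcal{S(X)}$ that equalizes $\|\kappa x_1+\tau y\|$ and $\|\kappa x_1-\tau y\|$ and applying the triangle inequality. Taking square roots yields (i).

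For part (ii), the first equality falls out of part (i) applied pointwise for every fixed $x_1\in\mathcal{S(X)}$. For the second equality the nontrivial direction is
\[
\sup_{x_1\in\mathcal{B(X)}}\sup_{x_2\in\mathcal{B(X)}}\big(\|\kappa x_1+\tau x_2\|\,\|\kappa x_1-\tau x_2\|\big)^{1/2}\leq \sup_{x_1\in\mathcal{S(X)}}\sup_{x_2\in\mathcal{B(X)}}\big(\cdots\big)^{1/2}.
\]
I would first apply (i) in the inner sup, swap the order of suprema, and then need to replace the outer $x_1\in\mathcal{B(X)}$ by $x_1\in\mathcal{S(X)}$ for each fixed $x_2\in\mathcal{S(X)}$. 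This last replacement is the analogue of (i) for the first slot.

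The main obstacle is that Lemma \ref{l2} is formulated asymmetrically in the two slots, scaling $x_2$ while $x_1$ is fixed. I would resolve this using the identity
\[
\|\kappa x_1+\tau x_2\|\,\|\kappa x_1-\tau x_2\|=\|\tau x_2+\kappa x_1\|\,\|\tau x_2-\kappa x_1\|,
\]
which shows that the expression is invariant under the joint swap $(x_1,x_2,\kappa,\tau)\mapsto(x_2,x_1,\tau,\kappa)$. Hence Lemma \ref{l2} applies verbatim with the roles of $\kappa,\tau$ and $x_1,x_2$ interchanged, giving monotonicity of $\sup_{x_1\in\mathcal{S(X)}}\|\kappa s x_1+\tau x_2\|\,\|\kappa s x_1-\tau x_2\|$ in $s\geq 0$. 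Rescaling each $x_1\in\mathcal{B(X)}$ as in the proof of (i) then closes the second equality.
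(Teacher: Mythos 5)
Your proposal is correct and follows essentially the same route as the paper: both reduce the ball-to-sphere passage to the monotonicity of $\psi_{x_1}$ from Lemma~\ref{l2} by writing $x_2=ty$ with $t=\|x_2\|\le 1$ and $y\in\mathcal{S(X)}$, and both obtain (ii) by normalizing the first variable and combining with (i). If anything, your version is tidier: the explicit symmetry $\|\kappa x_1+\tau x_2\|\,\|\kappa x_1-\tau x_2\|=\|\tau x_2+\kappa x_1\|\,\|\tau x_2-\kappa x_1\|$ cleanly justifies applying the lemma in the first slot (a step the paper performs by invoking Lemma~\ref{l2} without comment), and your direct use of $\psi_{x_1}(t)\le\psi_{x_1}(1)$ avoids the paper's case analysis around the threshold $\sqrt{\kappa^2+\tau^2}$.
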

\begin{proof}
	(i)Let $x_1\in\mathcal{B(X)}$, it's obviously that \begin{equation}\label{e1}
		\sup\limits_{x_2\in \mathcal{B(X)}}\|\kappa x_1+\tau x_2\|\|\kappa x_1-\tau x_2\|\geq\sup\limits_{x_2\in \mathcal{S(X)}}\|\kappa x_1+\tau x_2\|\|\kappa x_1-\tau x_2\|\geq\sqrt{\kappa^2+\tau^2}
	\end{equation}Now let $\mathcal{A}=\sup\limits_{x_2\in \mathcal{B(X)}}\|\kappa x_1+\tau x_2\|\|\kappa x_1-\tau x_2\|$. We consider the following cases:
	
	Case(i): If $\mathcal{A}=\sqrt{\kappa^2+\tau^2}$, then there is no need to prove.
	
	Case(ii): If $\mathcal{A}>\sqrt{\kappa^2+\tau^2}$, then we take any $\mathcal{B}\in(\sqrt{\kappa^2+\tau^2},A]$, and we take $x_3\in\mathcal{B_X}$ such that $$\|\kappa x_1+\tau x_3\|\|\kappa x_1-\tau x_3\|>\mathcal{B},$$ then by Lemma \ref{l2}, we have $$\bigg\|\kappa x_1+\tau \frac{x_3}{\|x_3\|}\bigg\|\bigg\|\kappa x_1-\tau \frac{x_3}{\|x_3\|}\bigg\|\geq\|\kappa x_1+\tau x_3\|\|\kappa x_1-\tau x_3\|>\mathcal{B},$$thus, we get that \begin{equation}\label{e2}
		\sup\limits_{x_2\in \mathcal{S(X)}}\|\kappa x_1+\tau x_2\|\|\kappa x_1-\tau x_2\|>\mathcal{B}
	\end{equation} is always valid for all $\mathcal{B}\in(\sqrt{\kappa^2+\tau^2},A]$.
	Hence, combine \eqref{e1} and \eqref{e2}, we obtain that 	$$\sup\limits_{x_2\in \mathcal{B(X)}}\|\kappa x_1+\tau x_2\|\|\kappa x_1-\tau x_2\|=\sup\limits_{x_2\in \mathcal{S(X)}}\|\kappa x_1+\tau x_2\|\|\kappa x_1-\tau x_2\|.$$
	Therefore, we completed the proof of (i).
	
	(ii)The first identity follows immediately from (i). Now we prove the second identity. First, it's obviously that \begin{equation}\label{e3}
		\sup_{x_1\in \mathcal{S(X)}}\sup_{x_2\in\mathcal{B(X)}}\bigg(\|\kappa x_1+\tau x_2\|\|\kappa x_1-\tau x_2\|\bigg)^\frac{1}{2}\leq\sup_{x_1\in \mathcal{B(X)}}\sup_{x_2\in \mathcal{B(X)}}\bigg(\|\kappa x_1+\tau x_2\|\|\kappa x_1-\tau x_2\|\bigg)^\frac{1}{2}.
	\end{equation}
	
	Conversely, by Lemma \ref{l2}, if $\|x_1\|\leq 1$, for any fixed $x_2\in\mathcal{B(X)}$, we have $$\bigg\|\kappa\frac{x_1}{\|x_1\|}+\tau x_2\bigg\|\bigg\|\kappa \frac{x_1}{\|x_1\|}-\tau x_2\bigg\|\geq\|\kappa x_1-\tau x_2\|\|\kappa x_2-\tau x_1\|,$$which means that $$\sup\limits_{x_2\in \mathcal{B(X)}}\bigg\|\kappa\frac{x_1}{\|x_1\|}+\tau x_2\bigg\|\bigg\|\kappa \frac{x_1}{\|x_1\|}-\tau x_2\bigg\|\geq\sup\limits_{x_2\in \mathcal{B(X)}}\|\kappa x_1-\tau x_2\|\|\kappa x_2-\tau x_1\|.$$Thus, we get that \begin{equation}\label{e4}
		\sup_{x_1\in \mathcal{S(X)}}\sup_{x_2\in\mathcal{B(X)}}\bigg(\|\kappa x_1+\tau x_2\|\|\kappa x_1-\tau x_2\|\bigg)^\frac{1}{2}\geq\sup_{x_1\in \mathcal{B(X)}}\sup_{x_2\in \mathcal{B(X)}}\bigg(\|\kappa x_1+\tau x_2\|\|\kappa x_1-\tau x_2\|\bigg)^\frac{1}{2}.
	\end{equation}
	Hence, by \eqref{e3}, \eqref{e4} and (i), we obtain that $$\mathcal{T}_1(\kappa,\tau,\mathcal{X})=\sup_{x_1\in \mathcal{B(X)}}\sup_{x_2\in \mathcal{B(X)}}\bigg(\|\kappa x_1+\tau x_2\|\|\kappa x_1-\tau x_2\|\bigg)^\frac{1}{2}.$$
\end{proof}
\begin{Proposition}
	Let $\mathcal{X}$ be a Banach space. Then, $$\sqrt{\kappa^2+\tau^2}\leq \mathcal{T}_1(\kappa,\tau,\mathcal{X})\leq\kappa +\tau .$$
\end{Proposition}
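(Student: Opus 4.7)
The upper bound is an immediate consequence of the triangle inequality: for every $x_1,x_2\in\mathcal{S(X)}$ one has $\|\kappa x_1\pm\tau x_2\|\le\kappa\|x_1\|+\tau\|x_2\|=\kappa+\tau$, hence $(\|\kappa x_1+\tau x_2\|\,\|\kappa x_1-\tau x_2\|)^{1/2}\le\kappa+\tau$, and taking the supremum on the left-hand side gives $\mathcal{T}_1(\kappa,\tau,\mathcal{X})\le\kappa+\tau$.

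For the lower bound, my plan is to exhibit, for a suitable $x_1\in\mathcal{S(X)}$, a companion $x_2\in\mathcal{S(X)}$ with $\|\kappa x_1+\tau x_2\|\,\|\kappa x_1-\tau x_2\|\ge\kappa^2+\tau^2$. First I pick any $x_1\in\mathcal{S(X)}$ and, by Hahn--Banach, a norming functional $f\in\mathcal{X}^*$ with $\|f\|=f(x_1)=1$. The hypothesis $\dim\mathcal{X}\ge 2$ guarantees $\ker f\ne\{0\}$, so I may choose $x_2\in\mathcal{S(X)}\cap\ker f$. The elementary estimate $\|\kappa x_1\pm\tau x_2\|\ge f(\kappa x_1\pm\tau x_2)=\kappa$ already yields $\mathcal{T}_1(\kappa,\tau,\mathcal{X})\ge\kappa$, and by the symmetric role of the two parameters, also $\ge\tau$. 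To reach the sharper Pythagorean bound $\sqrt{\kappa^2+\tau^2}$, I plan to supplement $f$ by a norming functional $g$ at $x_2$ and then adjust $x_2$ inside $\ker f$ via an intermediate-value argument on the unit circle of the 2-dimensional subspace $\mathrm{span}\{x_1,x_2\}$, so that the $f$- and $g$-estimates on $\|\kappa x_1+\tau x_2\|$ and $\|\kappa x_1-\tau x_2\|$ become simultaneously tight; monotonicity of $\psi_{x_1}(t)$ from Lemma~\ref{l2} will let me push the resulting vector back to the sphere without loss.

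The main obstacle is exactly this upgrade from the elementary $\max(\kappa,\tau)$ bound to the full $\sqrt{\kappa^2+\tau^2}$: a single Hahn--Banach projection cannot by itself produce the Hilbert value, and recovering it requires genuinely 2-dimensional geometry. This is the same difficulty as in the Alonso--Llorens-Fuster proof of $\mathcal{T}(\mathcal{X})\ge\sqrt{2}$ (the case $\kappa=\tau=1$), which is essentially the statement that Hilbert space is the extremal model for this family of constants, and the argument here generalises theirs by tracking the parameters $\kappa,\tau$.
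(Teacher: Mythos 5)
Your upper bound is fine and coincides with the paper's (triangle inequality on each factor, or equivalently AM--GM followed by the triangle inequality). The issue is the lower bound: what you actually prove is only $\mathcal{T}_1(\kappa,\tau,\mathcal{X})\geq \kappa\vee\tau$, which is strictly weaker than $\sqrt{\kappa^2+\tau^2}$ for all $\kappa,\tau>0$, and the passage from there to the stated bound is left as a ``plan'' rather than an argument. Concretely, the step you defer --- adding a second norming functional $g$ at $x_2$ and adjusting $x_2$ inside $\ker f$ by an intermediate-value argument so that the $f$- and $g$-estimates become ``simultaneously tight'' --- is exactly the content of the proposition, and it is doubtful that a duality argument of this shape can deliver it. Any norm-one functional $h$ gives only $\|\kappa x_1+\tau x_2\|\geq \kappa h(x_1)+\tau h(x_2)$, so to certify $\sqrt{\kappa^2+\tau^2}$ this way you would need $h$ with $\kappa h(x_1)+\tau h(x_2)\geq\sqrt{\kappa^2+\tau^2}$, i.e.\ an essentially Euclidean configuration of $x_1,x_2,h$, while simultaneously keeping the second factor $\|\kappa x_1-\tau x_2\|$ at least as large; nothing in a general normed plane guarantees such a pair, and the single continuous parameter you propose to vary (rotating $x_2$) is being asked to meet what is in effect a two-sided constraint.

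The paper crosses this gap with a genuinely two-dimensional, non-duality argument going back to Alonso and Ben\'{\i}tez: in any two-dimensional subspace $\mathcal{Y}$, the set of points $y_1+\frac{\tau}{\kappa}y_2$ with $y_1,y_2\in\mathcal{S_Y}$, a fixed orientation, and $\bigl\|y_1+\frac{\tau}{\kappa}y_2\bigr\|=\bigl\|y_1-\frac{\tau}{\kappa}y_2\bigr\|$ is a closed curve enclosing area $(1+\frac{\tau^2}{\kappa^2})\mathcal{A}(\mathcal{S_Y})$, which equals the area enclosed by the sphere dilated by $\sqrt{1+\tau^2/\kappa^2}$; hence the two curves must intersect, yielding $y_1,y_2\in\mathcal{S_Y}$ with $\bigl\|y_1+\frac{\tau}{\kappa}y_2\bigr\|=\bigl\|y_1-\frac{\tau}{\kappa}y_2\bigr\|=\sqrt{1+\tau^2/\kappa^2}$ and therefore $\bigl(\|\kappa y_1+\tau y_2\|\,\|\kappa y_1-\tau y_2\|\bigr)^{1/2}=\sqrt{\kappa^2+\tau^2}$. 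You correctly located the difficulty and its relation to the case $\kappa=\tau=1$, but the proposal does not actually resolve it; to complete the proof you need to import this area-comparison (or an equivalent topological) argument rather than Hahn--Banach.
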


\begin{proof}
	To begin with, we aim to demonstrate that for any two-dimensional subspace  $\mathcal{Y}\subseteq\mathcal{X}$, there exist vectors $y_1,y_2\in \mathcal{S_Y}$ for which the equality $\bigg\|y_1+\frac{\tau}{\kappa}y_2\bigg\|=\bigg\|y_1-\frac{\tau}{\kappa}y_2\bigg\|$ holds. Drawing inspiration from the proof presented in \cite{05}, we consider the case where $\mathcal{Y}$ is equivalent to $\mathbb{R}^2$, treating it as a linear space endowed with a norm $\|\cdot\|$ and an orientation defined by the vector $y_3$. 
	
	Define $$\mathcal{C}(\kappa,\tau)=\bigg\{y_1\in \mathcal{Y}:\|y_1\| = \sqrt{1 + \frac{\tau^2}{\kappa^2}}\bigg\}$$and $$\mathcal{S}(\kappa,\tau)=\bigg\{y_1+\frac{\tau}{\kappa}y_2:~\bigg\|y_1 + \frac{\tau}{\kappa}y_2\bigg\|=\bigg\|y_1- \frac{\tau}{\kappa}y_2\bigg\|,y_1,y_2\in \mathcal{S_Y},[y_1,y_2]=y_3\bigg\}.$$
	Since \(\mathcal{A}(\mathcal{C}(\kappa,\tau))=(1 + \frac{\tau^2}{\kappa^2})\mathcal{A}(\mathcal{S_Y})\) and \(\mathcal{A}(\mathcal{S(\kappa,\tau)})=(1 + \frac{\tau^2}{\kappa^2})\mathcal{A(S_Y)}\) (where \(\mathcal{A(\cdot)}\) represents the area of the region bounded by the corresponding set), we have \(\mathcal{C}(\kappa,\tau)\cap \mathcal{S(\kappa,\tau)}\neq\varnothing\).
	Then, there exist $y_1,y_2\in \mathcal{S_Y}$ such that \(\|y_1 + \frac{\tau}{\kappa}y_2\|=\|y_1-\frac{\tau}{\kappa} y_2\|=\sqrt{1 + \frac{\tau^2}{\kappa^2}}\). 
	Thus, we get that $$\begin{aligned}
		\bigg(\|\kappa y_1+\tau y_2\|\|\kappa y_1-\tau y_2\|\bigg)^\frac{1}{2}&=\kappa\bigg(\bigg\|y_1 + \frac{\tau}{\kappa}y_2\bigg\|\bigg\|y_1- \frac{\tau}{\kappa}y_2\bigg\|\bigg)^\frac{1}{2}\\&=\sqrt{\kappa^2+\tau^2}.
	\end{aligned}$$
	It follows that that $ \mathcal{T}_1(\kappa,\tau,\mathcal{X})\geq\sqrt{\kappa^2+\tau^2},$ as desired.
	
	Conversely, since $$\begin{aligned}
		\bigg(\|\kappa x_1+\tau x_2\|\|\kappa x_1-\tau x_2\|\bigg)^\frac{1}{2}&\leq\frac{\|\kappa x_1+\tau x_2\|+\|\kappa x_1-\tau x_2\|}{2}\\&\leq\kappa+\tau.
	\end{aligned}$$
	Thus, we obtain that $\mathcal{T}_1(\kappa,\tau,\mathcal{X})\leq\kappa +\tau$.
\end{proof}
Subsequently, we give the estimation of the $\mathcal{T}_1(\kappa,\tau,\mathcal{X})$ in the $\ell_p$ space.
\begin{Example}\label{e4}
	Denote by $\ell_p$ the collection of all infinite sequences $\mathbf{x}=(x_n)_{n\in\mathbb{N}}$ of real or complex numbers satisfying $\sum_{n = 1}^{\infty}|x_n|^p<\infty$. For $\mathbf{x}\in\ell_p$, its $\ell_p$- norm is given by $\|\mathbf{x}\|_p=\left(\sum_{n = 1}^{\infty}|x_n|^p\right)^{\frac{1}{p}}$. Then for $p > 2$,   $\mathcal{T}_1(\kappa,\tau,\ell_p)=2^{-\frac{1}{p}}\big[(\kappa+\tau)^p+|\kappa - \tau|^p\big]^{\frac{1}{p}}$ holds.
\end{Example}
In order to prove Example \ref{e4}, we need to introduce several important inequalities as lemmas.
\begin{Lemma}\label{l3}\cite{07}
	Consider the function \(f(\kappa, \tau)=\kappa^{2}+\tau^{2}\) subject to the constraint \(g(\kappa, \tau)=\kappa^{p}+\tau^{p}-\mathcal{C} = 0\), where \(\mathcal{C}\) is a constant.
	If \(p\geq2\), then the function \(f(\kappa, \tau)\) attains its maximum value of \(2^{1 - \frac{2}{p}}\cdot (\kappa^p+\tau^p)^{\frac{2}{p}}\) at the point \((\kappa,\tau)\) where \(\kappa = \tau=(\frac{a}{2})^{\frac{1}{p}}\).
\end{Lemma}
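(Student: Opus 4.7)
My plan is to reframe the constrained optimization as an inequality on the first quadrant (the natural setting suggested by the $\ell_p$ application to come) and then derive it from a Jensen-type argument. Concretely, I want to show that for $\kappa, \tau \geq 0$ and $p \geq 2$,
\[
\kappa^{2} + \tau^{2} \;\leq\; 2^{\,1 - 2/p}\,(\kappa^{p} + \tau^{p})^{2/p},
\]
with equality iff $\kappa = \tau$. On the constraint surface $\kappa^{p}+\tau^{p}=\mathcal{C}$ the right-hand side equals the constant $2^{1-2/p}\mathcal{C}^{2/p}$, so this inequality immediately yields the claimed maximum and equality iff $\kappa=\tau=(\mathcal{C}/2)^{1/p}$.

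The cleanest route is Jensen's inequality applied to the function $\varphi(s) = s^{p/2}$, which is convex on $[0,\infty)$ precisely because $p \geq 2$. Evaluating $\varphi$ at the two points $\kappa^{2}$ and $\tau^{2}$ with the uniform two-point measure gives
\[
\left(\frac{\kappa^{2}+\tau^{2}}{2}\right)^{p/2} \;\leq\; \frac{\kappa^{p}+\tau^{p}}{2}.
\]
Raising both sides to the power $2/p$ and multiplying by $2$ yields the displayed inequality. The equality case in Jensen occurs exactly when $\kappa^{2}=\tau^{2}$, i.e.\ $\kappa=\tau$, and combining this with $\kappa^{p}+\tau^{p}=\mathcal{C}$ forces $\kappa=\tau=(\mathcal{C}/2)^{1/p}$, which recovers the stated maximizer.

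As a sanity check (and an alternative proof) I would also run the Lagrange multiplier method: from $\nabla f = \lambda\,\nabla g$ one gets $2\kappa = \lambda p\kappa^{p-1}$ and $2\tau = \lambda p\tau^{p-1}$, so any interior critical point with $\kappa,\tau > 0$ satisfies $\kappa^{p-2}=\tau^{p-2}$ and hence $\kappa=\tau$. Comparing the interior value $2^{1-2/p}\mathcal{C}^{2/p}$ to the boundary values $f(\mathcal{C}^{1/p},0)=f(0,\mathcal{C}^{1/p})=\mathcal{C}^{2/p}$ confirms the interior point is the global maximum for $p > 2$.

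There is no real obstacle here: the assertion is a textbook power-mean inequality in disguise. The only fine points worth noting in the writeup are (a) the assumption $\kappa,\tau \geq 0$, which is tacit in the $\ell_p$ context where we will later apply the lemma to absolute values of coordinates, and (b) the degenerate case $p=2$, in which $f$ is constant on $\{g = \mathcal{C}\}$ and the inequality becomes equality everywhere, so the stated maximizer is still a maximizer, just not a unique one.
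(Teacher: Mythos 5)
Your proof is correct, but note that the paper does not actually prove this lemma at all: it is stated as a quoted result with a citation (to a reference key that, incidentally, does not even resolve in the bibliography), so there is no in-paper argument to compare against. Your Jensen-based derivation is a clean, self-contained justification: applying convexity of $s\mapsto s^{p/2}$ for $p\geq 2$ to the two points $\kappa^{2}$ and $\tau^{2}$ gives
\[
\left(\frac{\kappa^{2}+\tau^{2}}{2}\right)^{p/2}\leq\frac{\kappa^{p}+\tau^{p}}{2},
\]
which is exactly the power-mean inequality the lemma encodes, and on the constraint set $\kappa^{p}+\tau^{p}=\mathcal{C}$ it yields the stated maximum $2^{1-2/p}\mathcal{C}^{2/p}$ with the maximizer $\kappa=\tau=(\mathcal{C}/2)^{1/p}$. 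You also correctly repair two defects in the statement itself: the maximum should be written in terms of the constant $\mathcal{C}$ rather than the free variables $\kappa,\tau$, and the symbol $a$ in the claimed maximizer is evidently a typo for $\mathcal{C}$. Your Lagrange-multiplier cross-check and your remarks on the tacit hypothesis $\kappa,\tau\geq 0$ and the degenerate case $p=2$ are both sound; the only caveat is that uniqueness of the maximizer requires $p>2$ (strict convexity), which you already flag. In the context where the paper uses the lemma (the $\ell_p$ computation with $p>2$ and nonnegative quantities), everything you need is covered.
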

\begin{Lemma}\label{l4}\cite{09}
	Let $\mathcal{X}$ be a Banach space, for any $x_1,x_2\in\mathcal{X}$, if $p>2$, then $$\|x_1+x_2\|^p+\|x_1-x_2\|^p\leq(\|x_1\|+\|x_2\|)^p+|\|x_1\|-\|x_2\||^p.$$
\end{Lemma}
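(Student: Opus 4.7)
The plan is to prove this inequality, which is Hanner's inequality for $L^{p}$ (equivalently $\ell_{p}$) with $p \geq 2$; this is presumably the setting in which \cite{09} establishes the result, since the bound fails in Banach spaces such as $\ell_{1}^{2}$ or $\ell_{\infty}^{2}$ (take $x_{1}=e_{1}, x_{2}=e_{2}$). The standard strategy combines a sharp pointwise scalar inequality with an optimal parameter choice.

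First, introduce the Hanner weights, for $t \in (0,1]$:
$$\alpha(t) = (1+t)^{p-1} + (1-t)^{p-1}, \qquad \beta(t) = \frac{(1+t)^{p-1} - (1-t)^{p-1}}{t^{p-1}}.$$
By regrouping terms one obtains the key algebraic identity $\alpha(t) + t^{p}\beta(t) = (1+t)^{p} + (1-t)^{p}$, which will drive equality in the final optimization.

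Second, establish the scalar pointwise inequality: for all $a, b \in \mathbb{R}$ and $t \in (0,1]$,
$$|a+b|^{p} + |a-b|^{p} \leq \alpha(t)\,|a|^{p} + \beta(t)\,|b|^{p}.$$
By symmetry and homogeneity one reduces to $a = 1$, $b \in [0,1]$, and studies $h(b) = \alpha(t) + \beta(t)\,b^{p} - (1+b)^{p} - (1-b)^{p}$. A direct computation yields $h(t) = h'(t) = 0$, and $h''(t) = p(p-1)[(1+t)^{p-2} - (1-t)^{p-2}]/t \geq 0$ for $p \geq 2$. A sign analysis of $h'$ on $[0,t]$ and $[t,1]$ (using monotonicity and convexity of $x^{p-1}$, which requires $p \geq 2$) then shows $h \geq 0$ on all of $[0,1]$, with equality only at $b = t$.

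Third, summing the scalar inequality coordinatewise (for $\ell_{p}$) or integrating over the measure (for $L^{p}$) gives, for each $t \in (0,1]$,
$$\|x_{1}+x_{2}\|^{p} + \|x_{1}-x_{2}\|^{p} \leq \alpha(t)\,\|x_{1}\|^{p} + \beta(t)\,\|x_{2}\|^{p}.$$
Without loss of generality take $\|x_{1}\| \geq \|x_{2}\|$, and set $t = \|x_{2}\|/\|x_{1}\| \in [0,1]$. The identity from the first step then collapses the right-hand side to $(\|x_{1}\|+\|x_{2}\|)^{p} + (\|x_{1}\|-\|x_{2}\|)^{p}$, finishing the argument. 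The main obstacle is Step 2: the Hanner weights are tuned so that equality holds along the ray $b = ta$, and verifying $h \geq 0$ globally off this ray requires the specific derivative-sign analysis described above rather than any direct convexity argument.
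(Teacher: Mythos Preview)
The paper does not prove this lemma at all; it merely cites it from \cite{09} and then applies it in Example~\ref{e4} with $\mathcal{X}=\ell_p$. So there is no proof in the paper to compare your argument against.

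Your diagnosis of the hypothesis is correct and worth recording: as written (``Let $\mathcal{X}$ be a Banach space'') the inequality is false. In $\ell_1^2$ with $x_1=e_1$, $x_2=e_2$ one gets $\|x_1+x_2\|^p+\|x_1-x_2\|^p=2\cdot 2^p=2^{p+1}$ while $(\|x_1\|+\|x_2\|)^p+|\|x_1\|-\|x_2\||^p=2^p$, so the bound fails for every $p>2$. The paper only ever invokes the lemma for $\ell_p$, so the intended setting is $L^p/\ell_p$, and your decision to prove it there is the right repair.

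Your sketch is the standard Hanner argument and is sound: the algebraic identity $\alpha(t)+t^p\beta(t)=(1+t)^p+(1-t)^p$ is correct, the scalar function $h$ indeed satisfies $h(t)=h'(t)=0$ with $h''(t)=p(p-1)[(1+t)^{p-2}-(1-t)^{p-2}]/t\geq 0$ for $p\geq 2$, and the optimization in $t=\|x_2\|/\|x_1\|$ closes the loop. One bibliographic remark: the sharp inequality in this form is due to Hanner (1956); Clarkson's 1936 paper \cite{09} contains the weaker $2^{p-1}(\|x_1\|^p+\|x_2\|^p)$ bound, so the paper's citation is slightly off, though immaterial for the application.
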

\noindent\textbf{Proof of Example \ref{e4}}

First, by Lemma \ref{l4}, we have for any $x_1,x_2\in \mathcal{S}_{\ell_p}$,  \begin{equation}\label{e5}
	\|\kappa x_1+\tau x_2\|^p+\|\kappa x_1-\tau x_2\|^p\leq(\kappa+\tau)^p+|\kappa-\tau|^p.
\end{equation}
Then according to Lemma \ref{l3} and \eqref{e5}, we obtain that $$\begin{aligned}
	\|\kappa x_1+\tau x_2\|\|\kappa x_1-\tau x_2\|&\leq\frac{\|\kappa x_1+\tau x_2\|^2+\|\kappa x_1-\tau x_2\|^2}{2} \\&\leq\frac{2^{1-\frac{2}{p}}(\|\kappa x_1+\tau x_2\|^{p}+\|\kappa x_1-\tau x_2\|^{p})^{\frac{2}{p}}}{2}\\&\leq2^{-\frac{2}{p}}[(\kappa+\tau)^{p}+|\kappa-\tau|^{p}]^{\frac{2}{p}}.
\end{aligned}$$This implies that $\mathcal{T}_1(\kappa,\tau,\ell_p)\leq2^{-\frac{1}{p}}\big[(\kappa+\tau)^p+|\kappa-\tau|^p\big]^{\frac{1}{p}}.$

On the other hand, let $x_1=(\frac{1}{\sqrt[p]{2}},\frac{1}{\sqrt[p]{2}},0,0,\cdots)$ and $x_2=(\frac{1}{\sqrt[p]{2}},-\frac{1}{\sqrt[p]{2}},0,0,\cdots)$, then we have $x_1,x_2\in\mathcal{S}_{\ell_p}$ and $$	\|\kappa x_1+\tau x_2\|=\|\kappa x_1-\tau x_2\|=2^{-\frac{1}{p}}\big[(\kappa+\tau)^p+|\kappa-\tau|^p\big]^{\frac{1}{p}}.$$This implies that $\mathcal{T}_1(\kappa,\tau,\ell_p)\geq2^{-\frac{1}{p}}\big[(\kappa+\tau)^p+|\kappa-\tau|^p\big]^{\frac{1}{p}},$ as desired.

Next, we demonstrate the relationships between the $\mathcal{T}_1(\kappa,\tau,\mathcal{X})$ constant and other geometric constants as well as geometric properties in Banach spaces.

The following theorem  gives the relationship between $\mathcal{T}_1(\kappa,\tau,\mathcal{X})$ and $\mathcal{T}(\mathcal{X})$. 
\begin{Lemma}\cite{alonso2008geometric}\label{l5}
	Let $\mathcal{X}$ be a Banach space. For any $x_1 \in \mathcal{X}$
	$$r\in \mathbb{R}^+ \mapsto \phi_{x_1}(r) = \sup_{x_2 \in \mathcal{S(X)}} \|x_1 + rx_2\| \|x_1 - rx_2\|$$
	is an increasing function.
\end{Lemma}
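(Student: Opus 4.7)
The plan is to fix $0 < r_1 < r_2$ and establish $\phi_{x_1}(r_1) \le \phi_{x_1}(r_2)$ through a pointwise bound followed by passage to suprema. The starting point is the convex-combination identity, valid for every $x_2 \in \mathcal{S(X)}$,
\[
x_1 \pm r_1 x_2 \;=\; p\,(x_1 \pm r_2 x_2) + q\,(x_1 \mp r_2 x_2), \qquad p = \frac{r_2+r_1}{2 r_2},\quad q = \frac{r_2-r_1}{2 r_2},
\]
where $p, q \geq 0$ and $p+q = 1$. Applying the triangle inequality to each factor, multiplying, and using the elementary algebraic simplification
\[
(pA + qB)(qA + pB) \;=\; AB + pq(A-B)^2,
\]
with $A = \|x_1 + r_2 x_2\|$ and $B = \|x_1 - r_2 x_2\|$, gives the pointwise estimate
\[
\|x_1 + r_1 x_2\|\,\|x_1 - r_1 x_2\| \;\leq\; AB + pq(A-B)^2.
\]

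Next, I would take the supremum over $x_2 \in \mathcal{S(X)}$. The first term on the right contributes at most $\phi_{x_1}(r_2)$; the work is to absorb the error term $pq(A-B)^2 \geq 0$. Here I would exploit $\dim(\mathcal{X}) \geq 2$ and the area / intermediate-value technique used in the proof of the subsequent Proposition: passing to the two-dimensional subspace $\mathcal Y = \mathrm{span}(x_1, x_2)$ and varying the unit vector continuously along $\mathcal{S}_\mathcal{Y}$, one obtains a balanced direction $\widetilde x_2 \in \mathcal{S}_\mathcal{Y} \subset \mathcal{S(X)}$ with $\|x_1 + r_2 \widetilde x_2\| = \|x_1 - r_2 \widetilde x_2\|$, so that the analogous error term for $\widetilde x_2$ vanishes while the product $\|x_1 + r_2 \widetilde x_2\|\,\|x_1 - r_2 \widetilde x_2\|$ dominates the original quantity $AB + pq(A-B)^2$. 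Optimizing this perturbation and then taking suprema yields $\phi_{x_1}(r_1) \leq \phi_{x_1}(r_2)$.

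The main obstacle is the two-dimensional analysis: rigorously showing that the perturbation toward a balanced direction can be performed without losing too much of the product $AB$, so that the supremum on the right indeed collapses to $\phi_{x_1}(r_2)$. This is precisely the geometric content of Lemma~21 in \cite{alonso2008geometric}, which is the source of Lemma~\ref{l5}; once that step is in place, the monotonicity of $\phi_{x_1}$ on $\mathbb{R}^+$ follows. I would conclude by appealing to this cited result, noting that the proof in \cite{alonso2008geometric} carries out the delicate two-dimensional balancing argument in detail and that the same approach (up to the scaling $x_2 \mapsto (\tau/\kappa)\,t\,x_2$ used in the hint to Lemma~\ref{l2}) adapts to the parametrized versions considered here.
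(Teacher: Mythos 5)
The paper itself offers no proof of this lemma: it is imported directly from \cite{alonso2008geometric} (Lemma~21 there, as the proof of Lemma~\ref{l2} indicates), so the only part of your proposal that actually establishes the statement is the closing appeal to that reference, and in that respect you end where the paper does. The problem is that the direct argument you wrap around the citation contains a genuine gap rather than a routine omission. The decomposition $x_1\pm r_1x_2=p(x_1\pm r_2x_2)+q(x_1\mp r_2x_2)$ and the identity $(pA+qB)(qA+pB)=AB+pq(A-B)^2$ are both correct, but the resulting estimate $\|x_1+r_1x_2\|\,\|x_1-r_1x_2\|\le AB+pq(A-B)^2$ has its error term on the wrong side: taking suprema only gives $\phi_{x_1}(r_1)\le\sup_{x_2}\bigl[AB+pq(A-B)^2\bigr]$, a quantity that exceeds $\phi_{x_1}(r_2)=\sup_{x_2}AB$ rather than being bounded by it. Your proposed repair --- that the intermediate-value/area argument produces a balanced direction $\widetilde x_2$ whose product $\|x_1+r_2\widetilde x_2\|\,\|x_1-r_2\widetilde x_2\|$ dominates $AB+pq(A-B)^2$ --- is asserted without proof, and nothing in the balancing construction supplies it: that construction only guarantees the existence of \emph{some} $\widetilde x_2$ with $\|x_1+r_2\widetilde x_2\|=\|x_1-r_2\widetilde x_2\|$, hence a product at least $\|x_1\|^2$, whereas $AB+pq(A-B)^2$ can be as large as $\bigl(\tfrac{A+B}{2}\bigr)^2$, and no comparison between the two is established.

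The obstruction is essential, not cosmetic. The pointwise map $t\mapsto\|x_1+tx_2\|\,\|x_1-tx_2\|$ need not be increasing: in $(\mathbb{R}^2,\|\cdot\|_\infty)$ with $x_1=(1,0)$ and $x_2=(1,1)$ one computes $\|x_1+tx_2\|\,\|x_1-tx_2\|=(1+t)(1-t)=1-t^2$ for $0<t\le\tfrac12$, which is strictly decreasing. Consequently no scheme of the form ``pointwise bound plus a nonnegative error absorbed term by term'' can prove the lemma; the supremum over $x_2$ must enter the argument structurally, and that is precisely the content of the two-dimensional analysis in \cite{alonso2008geometric} that your sketch defers. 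If you intend to rely on that result, as the paper does, cite it cleanly and delete the incomplete computation; if you intend a self-contained proof, the balancing step is the entire proof and still has to be written.
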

\begin{Lemma}\label{l6}
	Let $\mathcal{X}$ be a Banach space. For any $x_1,x_2 \in \mathcal{S(X)}$, then the following conclusions hold:
	
	(i)~
	$t \in \mathbb{R}^+ \mapsto f(t)=\|x_1 + tx_2\| \|x_1 - tx_2\|$
	is an increasing function.	
	
	(ii)~$t \in \mathbb{R}^+ \mapsto g(t)=\|tx_1 + x_2\| \|tx_1 - x_2\|$
	is an increasing function.	
\end{Lemma}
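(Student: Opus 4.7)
The plan is to mimic the proof of Lemma \ref{l5} (Lemma 21 of \cite{alonso2008geometric}) and of Lemma \ref{l2}, namely a convex-combination argument coupled with the triangle inequality. Since the lemma here asserts monotonicity for fixed $x_1, x_2 \in \mathcal{S(X)}$ rather than for a supremum over $x_2$, I expect the same algebraic manipulation to carry over without invoking the extra flexibility of a variable unit vector.

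For part (i), I would fix $x_1, x_2 \in \mathcal{S(X)}$ and take $0 < s \leq t$. Setting $\alpha = (1 + s/t)/2 \in [1/2, 1]$, one verifies the representations
$$x_1 + sx_2 = \alpha(x_1 + tx_2) + (1-\alpha)(x_1 - tx_2),$$
$$x_1 - sx_2 = (1-\alpha)(x_1 + tx_2) + \alpha(x_1 - tx_2).$$
The triangle inequality then gives $\|x_1 + sx_2\| \leq \alpha A + (1-\alpha) B$ and $\|x_1 - sx_2\| \leq (1-\alpha)A + \alpha B$, where $A = \|x_1 + tx_2\|$ and $B = \|x_1 - tx_2\|$. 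Multiplying these two bounds and expanding yields
$$f(s) \leq (\alpha A + (1-\alpha) B)((1-\alpha) A + \alpha B) = AB + \alpha(1-\alpha)(A - B)^2,$$
so the target $f(s) \leq AB = f(t)$ will follow once the quadratic error term is absorbed.

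For part (ii), since the identity $g(t) = t^2 f(1/t)$ does not immediately transport monotonicity of $f$ to $g$, I would rerun the same convex-combination argument for the expression $\|tx_1 \pm x_2\|$: for $0 < s \leq t$ write $sx_1 \pm x_2$ as the corresponding convex combination of $tx_1 + x_2$ and $tx_1 - x_2$ with the same coefficient $\alpha = (1 + s/t)/2$, apply the triangle inequality, and then multiply to derive an analogous estimate of the form $g(s) \leq g(t) + \alpha(1-\alpha)(A'-B')^2$ with $A' = \|tx_1 + x_2\|$, $B' = \|tx_1 - x_2\|$.

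The main obstacle in both parts is exactly that residual term $\alpha(1-\alpha)(A-B)^2$: the naive triangle inequality produces an upper bound strictly larger than $f(t)$ whenever $A \neq B$, so neither triangle inequality nor AM-GM alone suffices. Closing this gap will require a more refined choice of convex weights adapted to the ratio $A/B$, or a supplementary argument extracting cancellation from norming functionals of $x_1 + tx_2$ and $x_1 - tx_2$ (which is where the convexity of $r \mapsto \|x_1 + rx_2\|$ would enter); this delicate step is the part of the proof I would expect to absorb the bulk of the work.
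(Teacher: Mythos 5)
Your instinct that the residual term $\alpha(1-\alpha)(A-B)^2$ is a genuine obstacle is correct, but the situation is worse than you suspect: the gap cannot be closed, because the lemma as stated is false. Take $x_2 = x_1 \in \mathcal{S(X)}$ (the hypothesis allows this). Then $f(t) = \|x_1+tx_1\|\,\|x_1-tx_1\| = (1+t)\,|1-t| = |1-t^2|$, which is strictly decreasing on $(0,1)$ and vanishes at $t=1$; the same happens for $g$ in part (ii). Even if one insists $x_1 \neq x_2$, taking $x_2$ close to $x_1$ makes $f(1)=\|x_1+x_2\|\|x_1-x_2\|$ arbitrarily small while $f(t)\approx 1$ for small $t$. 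Your convex-combination computation is exactly the right lens for seeing why no argument can work: the triangle inequality only bounds $f(s)$ \emph{above} by $f(t)$ plus a nonnegative error, and no refined choice of weights or norming functionals can reverse an inequality that is simply not true pointwise.

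For comparison, the paper's own ``proof'' is a one-line appeal to Lemma \ref{l5}, but that lemma concerns $\sup_{x_2\in\mathcal{S(X)}}\|x_1+rx_2\|\|x_1-rx_2\|$, i.e.\ monotonicity \emph{after} taking the supremum over the second vector; in the Alonso--Llorens-Fuster argument the witness $x_2$ is allowed to change with $r$, which is precisely the flexibility the fixed-vector statement lacks. So the paper does not prove Lemma \ref{l6} either, and its derivation of part (ii) from part (i) by ``swapping the roles of $x_1$ and $x_2$'' inherits the same defect. The downstream application (the bounds $(\kappa\wedge\tau)\mathcal{T}(\mathcal{X})\leq\mathcal{T}_1(\kappa,\tau,\mathcal{X})\leq(\kappa\vee\tau)\mathcal{T}(\mathcal{X})$) only requires monotonicity of the suprema, so the correct repair is to discard the pointwise claim and invoke Lemma \ref{l5} (in each variable separately) directly in that theorem, rather than to try to salvage Lemma \ref{l6}.
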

\begin{proof}
	(i)~By Lemma \ref{l5}, it's obvious that this conclusion can be obtained.
	
	(ii)~From (i), we know that $x_1,x_2 \in \mathcal{S(X)}$ are arbitrary, thus we can obtain that $t \in \mathbb{R}^+ \mapsto g(t)=\|x_2 + tx_1\| \|x_2 - tx_1\|$
	is an increasing function, as desired.
\end{proof}
\begin{theorem}	
	Let $\mathcal{X}$ be a Banach space. Then 
	$$(\kappa\wedge\tau)\mathcal{T}(\mathcal{X})\leq\mathcal{T}_1(\kappa,\tau,\mathcal{X})\leq(\kappa\vee\tau)\mathcal{T}(\mathcal{X}).$$
\end{theorem}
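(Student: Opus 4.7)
The plan is to exploit the homogeneity of the norm to pull out the larger of the two scalars, and then apply the monotonicity results from Lemma \ref{l6} to compare the resulting quantity with the $\mathcal{T}(\mathcal{X})$ expression. Without loss of generality I would consider the case $\kappa \leq \tau$, so that $\kappa \wedge \tau = \kappa$ and $\kappa \vee \tau = \tau$; the other case follows by symmetry (or by noting that $\|\kappa x_1 + \tau x_2\|\|\kappa x_1 - \tau x_2\|$ is invariant under the simultaneous swap $(x_1,x_2)\mapsto(x_2,x_1)$, $(\kappa,\tau)\mapsto(\tau,\kappa)$, up to a sign).

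For the lower bound $(\kappa\wedge\tau)\mathcal{T}(\mathcal{X}) \leq \mathcal{T}_1(\kappa,\tau,\mathcal{X})$, I would fix arbitrary $x_1,x_2\in\mathcal{S(X)}$ and factor out $\kappa$:
$$\|\kappa x_1 + \tau x_2\|\|\kappa x_1 - \tau x_2\| = \kappa^2\,\Bigl\|x_1 + \tfrac{\tau}{\kappa}x_2\Bigr\|\Bigl\|x_1 - \tfrac{\tau}{\kappa}x_2\Bigr\|.$$
Since $\tau/\kappa \geq 1$, Lemma \ref{l6}(i) applied to $f(t) = \|x_1+tx_2\|\|x_1-tx_2\|$ yields $f(\tau/\kappa) \geq f(1) = \|x_1+x_2\|\|x_1-x_2\|$. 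Taking the supremum over $x_1,x_2\in\mathcal{S(X)}$ and extracting the square root gives the desired inequality.

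For the upper bound $\mathcal{T}_1(\kappa,\tau,\mathcal{X}) \leq (\kappa\vee\tau)\mathcal{T}(\mathcal{X})$, I would factor out $\tau$ instead:
$$\|\kappa x_1 + \tau x_2\|\|\kappa x_1 - \tau x_2\| = \tau^2\,\Bigl\|\tfrac{\kappa}{\tau}x_1 + x_2\Bigr\|\Bigl\|\tfrac{\kappa}{\tau}x_1 - x_2\Bigr\|.$$
Now $\kappa/\tau \leq 1$, so Lemma \ref{l6}(ii) applied to $g(t) = \|tx_1+x_2\|\|tx_1-x_2\|$ gives $g(\kappa/\tau) \leq g(1) = \|x_1+x_2\|\|x_1-x_2\|$. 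Taking sup and square root completes this direction.

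There is no real obstacle here: both halves reduce to a scaling-plus-monotonicity argument, and Lemma \ref{l6} supplies exactly the monotonicity needed. The only mild care required is to keep track of which variable is rescaled (so that the correct part, (i) or (ii), of Lemma \ref{l6} applies) and to handle the case $\kappa > \tau$ symmetrically, which I would dispatch with a single remark rather than repeating the argument.
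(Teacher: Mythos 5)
Your proposal is correct and follows essentially the same route as the paper: both arguments factor out one scalar by homogeneity and then invoke the monotonicity of $f$ and $g$ from Lemma \ref{l6} (parts (i) and (ii) respectively) to compare with $\|x_1+x_2\|\|x_1-x_2\|$ before taking suprema. The only cosmetic difference is that the paper splits into the three cases $\kappa=\tau$, $\kappa<\tau$, $\kappa>\tau$ explicitly, whereas you dispatch the symmetric case with a remark.
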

\begin{proof}
	In order to establish the theorem, we analyze the problem by examining the subsequent three scenarios: 
	
	\textbf{Case(i)} If $\kappa=\tau$, then $$\begin{aligned}
		\mathcal{T}_1(\kappa,\tau,\mathcal{X})&=\sup\limits_{x_1,x_2\in \mathcal{S(X)}}\bigg(\|\kappa x_1+\tau x_2\|\|\kappa x_1-\tau x_2\|\bigg)^\frac{1}{2}\\&=\kappa\sup\limits_{x_1,x_2\in \mathcal{S(X)}}\bigg(\|x_1+x_2\|\|x_1-x_2\|\bigg)^\frac{1}{2}\\&=\tau\sup\limits_{x_1,x_2\in \mathcal{S(X)}}\bigg(\|x_1+x_2\|\|x_1-x_2\|\bigg)^\frac{1}{2}\\&=\kappa\mathcal{T}(\mathcal{X})=\tau\mathcal{T}(\mathcal{X}).
	\end{aligned}$$
	
	\textbf{Case(ii)}~If $\kappa<\tau$, we have $\frac{\kappa}{\tau}<1$ and $\frac{\tau}{\kappa}>1$ , then by Lemma \ref{l6}, we get that $f(\frac{\tau}{\kappa})>f(1)$ and $g(\frac{\kappa}{\tau})<g(1)$, which means that$$\bigg\|x_1 + \frac{\tau}{\kappa}x_2\bigg\| \bigg\|x_1 - \frac{\tau}{\kappa}x_2\bigg\|>\|x_1+x_2\|\|x_1-x_2\|$$ and $$\bigg\|\frac{\kappa}{\tau}x_1 + x_2\bigg\| \bigg\|\frac{\kappa}{\tau}x_1 - x_2\bigg\|<\|x_1+x_2\|\|x_1-x_2\|.$$ This implies that 
	$$\kappa\mathcal{T}(\mathcal{X})\leq\mathcal{T}_1(\kappa,\tau,\mathcal{X})\leq\tau\mathcal{T}(\mathcal{X}).$$
	
	\textbf{Case(iii)}~If $\kappa>\tau$, use the same method as in Case(ii), we can obtain that 	$$\tau\mathcal{T}(\mathcal{X})\leq\mathcal{T}_1(\kappa,\tau,\mathcal{X})\leq\kappa\mathcal{T}(\mathcal{X}).$$
	Combining the above three cases, we get that 	$$(\kappa\wedge\tau)\mathcal{T}(\mathcal{X})\leq\mathcal{T}_1(\kappa,\tau,\mathcal{X})\leq(\kappa\vee\tau)\mathcal{T}(\mathcal{X}).$$
\end{proof}

The following theorem aims to give the relationship between $\mathcal{T}_1(\kappa,\tau,\mathcal{X})$ and the uniformly non-squareness in Banach spaces, we give a lemma first.
\begin{Lemma}\cite{06}\label{l7}
	Let $\mathcal{X}$ be a  not uniformly non-square Banach space, then there exist $x_1,x_2\in\mathcal{S(X)}$, for any $\varepsilon>0$ such that $\|x_1+x_2\|\vee\|x_1-x_2\|>2-\frac{\varepsilon}{4}$.
\end{Lemma}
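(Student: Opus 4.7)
My plan is to derive the claimed inequality directly from the negation of the definition of uniform non-squareness given in the introduction. I would first recall that $\mathcal{X}$ is uniformly non-square precisely when there exists some $\delta \in (0,1)$ such that
\[
\left\|\frac{x_1+x_2}{2}\right\| \wedge \left\|\frac{x_1-x_2}{2}\right\| \leq 1-\delta
\]
for all $x_1, x_2 \in \mathcal{S(X)}$. Taking the logical negation, $\mathcal{X}$ fails to be uniformly non-square exactly when, for every $\delta \in (0,1)$, there exist $x_1, x_2 \in \mathcal{S(X)}$ (depending on $\delta$) with $\|x_1+x_2\| \wedge \|x_1-x_2\| > 2-2\delta$.

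Given an arbitrary $\varepsilon > 0$, I would then apply this negation with $\delta := \min\{\varepsilon/8,\; 1/2\}$, chosen so that $\delta \in (0,1)$. This produces a pair $x_1, x_2 \in \mathcal{S(X)}$ for which $\|x_1+x_2\| \wedge \|x_1-x_2\| > 2 - \varepsilon/4$. Since the maximum of two real numbers is always at least their minimum, I would conclude
\[
\|x_1+x_2\| \vee \|x_1-x_2\| \;\geq\; \|x_1+x_2\| \wedge \|x_1-x_2\| \;>\; 2 - \frac{\varepsilon}{4},
\]
which is exactly the asserted inequality. In the regime $\varepsilon \geq 8$ the right-hand side is nonpositive and any pair of unit vectors trivially satisfies the bound, so nothing needs to be checked there.

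I do not anticipate any substantive obstacle, and the only thing to be careful about is purely formal: the scope of the quantifier on $(x_1, x_2)$. These vectors are chosen as a function of $\varepsilon$, and no single pair is claimed to realize the bound simultaneously for all $\varepsilon$. Notably, a much stronger James-type statement, asserting that both $\|x_1+x_2\|$ and $\|x_1-x_2\|$ can be made close to $2$ at once, is \emph{not} needed here because $\vee$ is weaker than $\wedge$; a direct unwinding of the definition through its $\wedge$-form already yields the $\vee$-form for free. This also explains why the authors invoke the lemma as a preparatory step rather than reproving it: the geometric content lies entirely in the definition of uniform non-squareness.
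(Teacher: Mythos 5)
Your proof is correct, and in fact there is nothing in the paper to compare it with: Lemma~\ref{l7} is imported by citation from \cite{06} and the authors give no argument for it, so your direct unwinding of the definition of uniform non-squareness is exactly the right level of effort. Two remarks. First, you correctly diagnose the quantifier issue: as printed the lemma reads ``$\exists x_1,x_2$ such that $\forall \varepsilon$,'' but the only reading consistent with how it is invoked later (letting $\varepsilon\to 0$ under a supremum) is ``$\forall\varepsilon\,\exists x_1,x_2$,'' which is what you prove. Second, and more importantly, the $\vee$ in the statement is almost certainly a misprint for $\wedge$: with $\vee$ the conclusion is trivially true in \emph{every} Banach space (take $x_1=x_2$, so $\|x_1+x_2\|=2$), and the subsequent theorem actually uses the lemma in the form ``both $\|x_1+x_2\|$ and $\|x_1-x_2\|$ exceed $2-\varepsilon/4$,'' i.e.\ the $\wedge$ version. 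Your argument happens to be robust to this: negating the definition yields a pair with $\|x_1+x_2\|\wedge\|x_1-x_2\|>2-2\delta$, so with $\delta=\min\{\varepsilon/8,1/2\}$ you have already established the stronger $\wedge$ form before weakening it to $\vee$. The only thing I would change is your closing remark that the ``James-type statement that both norms are close to $2$ at once is not needed'' --- it \emph{is} needed for the application in the paper, and your proof does supply it; I would state the lemma with $\wedge$ and drop the weakening step.
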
 
\begin{theorem}
	
	Let $\mathcal{X}$ be a Banach space with $\mathcal{T}_1(\kappa,\tau,\mathcal{X})<\kappa +\tau$, then $\mathcal{X}$ is uniformly non-square.
	
\end{theorem}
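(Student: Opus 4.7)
The plan is to argue by contrapositive. Suppose $\mathcal{X}$ is not uniformly non-square; I would aim to show $\mathcal{T}_1(\kappa,\tau,\mathcal{X}) = \kappa+\tau$, contradicting the strict inequality in the hypothesis. By Lemma~\ref{l7}, for every $\varepsilon>0$ one can select $x_1,x_2\in\mathcal{S(X)}$ such that both $\|x_1+x_2\|$ and $\|x_1-x_2\|$ are arbitrarily close to $2$; I would use the wedge-form conclusion here, which is the standard characterization of non-uniformly-non-square spaces via $\mathcal{J(X)}=2$, possibly after flipping the sign of $x_2$.

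With such $x_1,x_2$ in hand, without loss of generality assume $\kappa\geq\tau$ (the case $\tau>\kappa$ is entirely symmetric). The key algebraic trick is the two decompositions
\[
\kappa x_1 + \tau x_2 = \kappa(x_1+x_2) - (\kappa-\tau)x_2, \qquad \kappa x_1 - \tau x_2 = \kappa(x_1-x_2) + (\kappa-\tau)x_2,
\]
each of which extracts a scalar multiple of $x_1\pm x_2$ plus a small correction of norm $\kappa-\tau$. Applying the triangle inequality and using $\|x_2\|=1$ together with the near-maximality of $\|x_1\pm x_2\|$, I get
\[
\|\kappa x_1 \pm \tau x_2\| \;\geq\; \kappa\|x_1\pm x_2\| - (\kappa-\tau) \;>\; \kappa(2-\varepsilon) - (\kappa-\tau) \;=\; \kappa + \tau - \kappa\varepsilon.
\]
Multiplying the two resulting lower bounds and taking the square root yields $\mathcal{T}_1(\kappa,\tau,\mathcal{X}) \geq \kappa+\tau-\kappa\varepsilon$. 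Since $\varepsilon>0$ is arbitrary, letting $\varepsilon\to 0$ and combining with the upper bound $\mathcal{T}_1(\kappa,\tau,\mathcal{X})\leq\kappa+\tau$ from the preceding proposition gives $\mathcal{T}_1(\kappa,\tau,\mathcal{X}) = \kappa+\tau$, finishing the contrapositive.

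The main (really only) subtle point is ensuring that Lemma~\ref{l7} genuinely delivers the simultaneous near-maximality of both $\|x_1+x_2\|$ and $\|x_1-x_2\|$, rather than just the larger of the two. If only one were close to $2$, the product of the two lower bounds could collapse to something as small as $(\kappa+\tau)|\kappa-\tau|$ (consider $x_1=x_2$ as an extreme case), which is strictly less than $(\kappa+\tau)^2$ whenever $\kappa\neq\tau$. Once the wedge-form of the lemma is invoked, the rest of the argument is a routine pair of triangle-inequality estimates.
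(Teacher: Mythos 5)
Your argument is correct and follows essentially the same route as the paper's proof: the contrapositive via Lemma \ref{l7}, the decomposition $\kappa x_1\pm\tau x_2=\kappa(x_1\pm x_2)\mp(\kappa-\tau)x_2$ giving $\|\kappa x_1\pm\tau x_2\|\geq\kappa\|x_1\pm x_2\|-(\kappa-\tau)$, and the limit $\varepsilon\to0$ against the upper bound $\kappa+\tau$. Your closing caveat is well placed: Lemma \ref{l7} is printed with $\vee$, but both the paper's proof and yours genuinely require the wedge form (both $\|x_1+x_2\|$ and $\|x_1-x_2\|$ simultaneously near $2$), which is what the standard characterization $\mathcal{J(X)}=2$ of non-uniformly-non-square spaces actually supplies.
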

\begin{proof}
	Without loss of generality, we start by assuming that $\kappa\geq\tau$. Given that $\mathcal{X}$ is not uniformly non-square, leveraging Lemma \ref{l7}, we obtain
	$$\begin{aligned}
		\|\kappa x_1+\tau x_2\|&\geq\kappa\|x_1+x_2\|-(\kappa-\tau)\\&\geq\kappa+\tau-\frac{\varepsilon}{4},
	\end{aligned}$$and$$\begin{aligned}
		\|\kappa x_1-\tau x_2\|&\geq\kappa\|x_1-x_2\|-(\kappa-\tau)\\&\geq\kappa+\tau-\frac{\varepsilon}{4}.
	\end{aligned}$$Let $\varepsilon\to 0$, we obtain that $\mathcal{T}_1(\kappa,\tau,\mathcal{X})\geq\kappa +\tau$.
\end{proof}
\vspace{-1.6cm}
\begin{figure}[!h]
	\centering
	\includegraphics[width=0.9\linewidth]{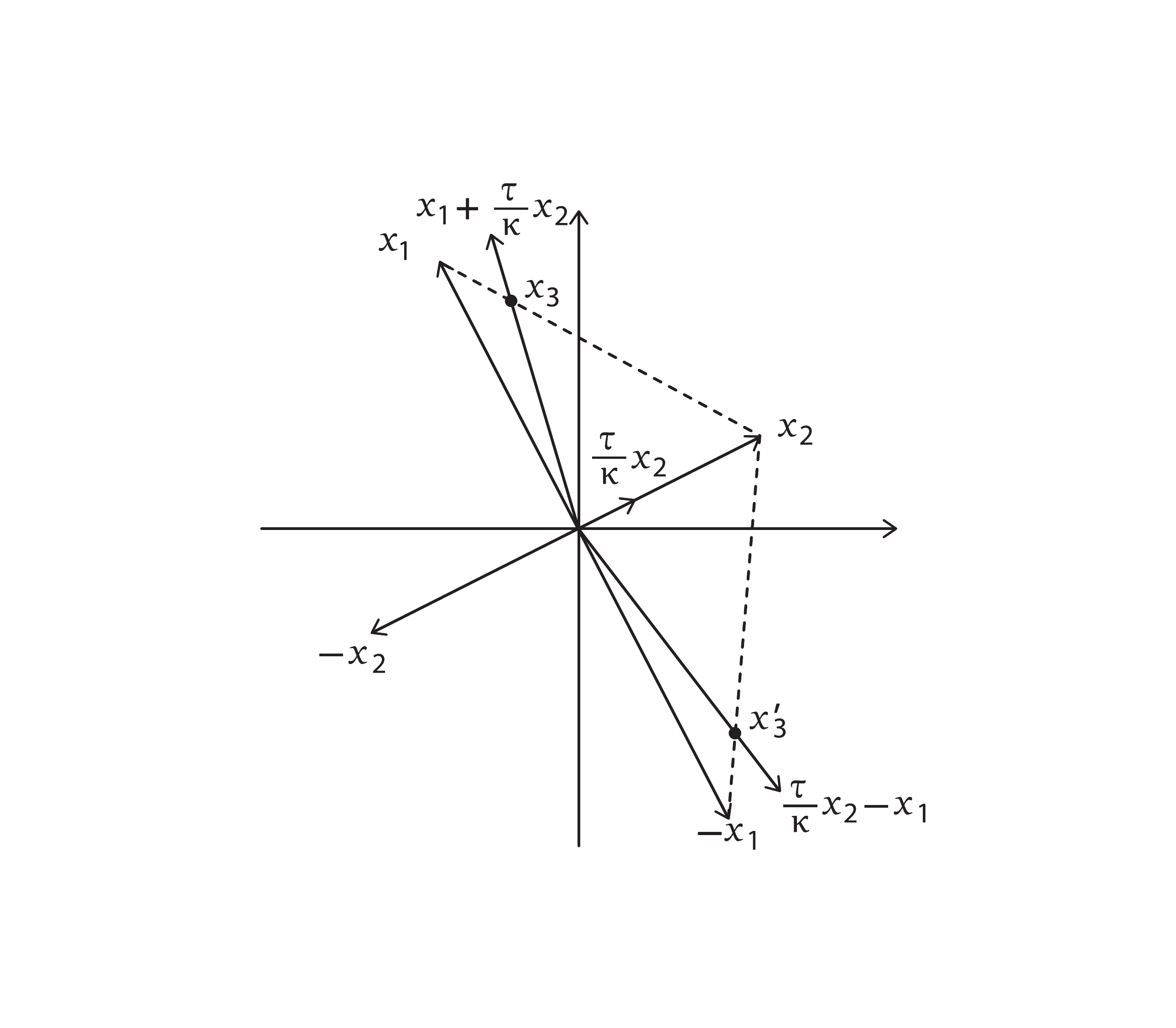}
	\caption{The vector diagram used to prove that $\mathcal{X}$ is uniformly non-square.}
	\label{f1}
\end{figure}

In order to describe the above theorem more accurately, we use the following geometric pictures to aid in comprehension. 

\begin{Remark}
	See the Figure \ref{f1}, we denote $x_3=[x_1, x_2]\cap[O, x_1+\frac{\tau}{\kappa}x_2]$ and since the triangle co$[\{x_1,x_3,x_1+\frac{\tau}{\kappa}x_2\}]$ is similar to the triangle co$[\{x_2,x_3,\frac{\tau}{\kappa}x_2\}]$, we have $\frac{\|x_1+\frac{\tau}{\kappa}x_2\|-\|x_3\|}{\|x_3\|}=\frac{\|\frac{\tau}{\kappa}x_2\|}{\|x_2
		\|}=\frac{\tau}{\kappa},$ thus, we get that $$\|x_1+\frac{\tau}{\kappa}x_2\|=(1+\frac{\tau}{\kappa})\|x_3\|\geq (1+\frac{\tau}{\kappa})\cdot\frac{\|x_1+x_2\|}{2}\geq(1+\frac{\tau}{\kappa})(1-\frac{\varepsilon}{8}).$$ Similarly, we have $$\|x_1-\frac{\tau}{\kappa}x_2\|=(1+\frac{\tau}{\kappa})\|x'_3\|\geq (1+\frac{\tau}{\kappa})\cdot\frac{\|x_1-x_2\|}{2}\geq(1+\frac{\tau}{\kappa})(1-\frac{\varepsilon}{8}).$$
	Then let $\varepsilon\to 0$, by the  equivalent definition  of the $\mathcal{T}_1(\kappa,\tau,\mathcal{X})$ constant, we obtain that $$\kappa\bigg(\|x_1+\frac{\tau}{\kappa}x_2\|\|x_1-\frac{\tau}{\kappa}x_2\|\bigg)^\frac{1}{2}\geq\kappa+\tau.$$This implies that $\mathcal{T}_1(\kappa,\tau,\mathcal{X})\geq\kappa+\tau.$ 
\end{Remark}

\section{Propoties of the skew constant $\mathcal{T}_2(\kappa,\tau,\mathcal{X})$}
By parameterizing $\mathcal{T}(\mathcal{X})$, we obtain another skew generalization of $\mathcal{T}(\mathcal{X})$, for papers on skew parameters, please refer to \cite{20,21,22,23}.  Now we present the key definition of the $\mathcal{T}_2(\kappa,\tau,\mathcal{X})$ constant:
\begin{Definition}
	For $\kappa,\tau>0$, we define $$\mathcal{T}_2(\kappa,\tau,\mathcal{X})=\sup\limits_{x_1,x_2\in \mathcal{S(X)}}\bigg(\|\kappa x_1+\tau x_2\|\|\tau x_1-\kappa x_2\|\bigg)^\frac{1}{2}.$$
\end{Definition}
\begin{Remark}
	$\mathcal{T}_2(1,1,\mathcal{X})=\mathcal{T}(\mathcal{X})=\sup\limits_{x_1,x_2\in \mathcal{S(X)}}\bigg(\|x_1+x_2\|\|x_1-x_2\|\bigg)^\frac{1}{2}.$
\end{Remark}
\begin{Lemma}\label{l1}
	If $\mathcal{H}$ is a Hilbert space, then for any $\kappa,\tau>0$, we have $$\mathcal{T}_2(\kappa,\tau,\mathcal{H})=\sqrt{\kappa^2+\tau^2}.$$
\end{Lemma}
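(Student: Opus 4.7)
The plan is to exploit the inner product structure of $\mathcal{H}$ to compute $\|\kappa x_1+\tau x_2\|^2$ and $\|\tau x_1-\kappa x_2\|^2$ exactly, then extract the supremum of their product.

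First, for any $x_1,x_2 \in \mathcal{S(H)}$, I would expand
\[
\|\kappa x_1+\tau x_2\|^2 = \kappa^2+\tau^2+2\kappa\tau\langle x_1,x_2\rangle, \qquad \|\tau x_1-\kappa x_2\|^2 = \kappa^2+\tau^2-2\kappa\tau\langle x_1,x_2\rangle.
\]
Multiplying these two expressions gives a difference-of-squares identity:
\[
\|\kappa x_1+\tau x_2\|^2\,\|\tau x_1-\kappa x_2\|^2 = (\kappa^2+\tau^2)^2 - 4\kappa^2\tau^2\langle x_1,x_2\rangle^2 \leq (\kappa^2+\tau^2)^2.
\]
Taking the square root and then the square root again yields the upper bound $\mathcal{T}_2(\kappa,\tau,\mathcal{H}) \leq \sqrt{\kappa^2+\tau^2}$.

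For the matching lower bound, I would use the hypothesis $\dim\mathcal{H}\geq 2$ to pick orthogonal unit vectors $x_1,x_2 \in \mathcal{S(H)}$ with $\langle x_1,x_2\rangle = 0$. The two expansions above then both equal $\kappa^2+\tau^2$, so
\[
\bigl(\|\kappa x_1+\tau x_2\|\,\|\tau x_1-\kappa x_2\|\bigr)^{1/2} = \sqrt{\kappa^2+\tau^2},
\]
showing that the supremum is attained and establishing the reverse inequality.

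There is no real obstacle here; the statement is essentially a direct computation using the polarization/parallelogram identities, and the only subtle point is the appeal to $\dim\mathcal{H}\geq 2$ (a standing assumption in the paper) to guarantee the existence of orthogonal unit vectors attaining the bound.
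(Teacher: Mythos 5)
Your proposal is correct and follows essentially the same route as the paper: expand both norms via the inner product, use the difference-of-squares identity $\|\kappa x_1+\tau x_2\|^2\|\tau x_1-\kappa x_2\|^2=(\kappa^2+\tau^2)^2-4\kappa^2\tau^2\langle x_1,x_2\rangle^2$, and note the supremum is attained at orthogonal unit vectors. No gaps.
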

\begin{proof}
	Assume that the norm of $\mathcal{H}$ is introduced by the inner product $\langle\cdot,\cdot\rangle$, then for any $\kappa,\tau>0$ and $x_1,x_2\in \mathcal{S(H)}$, we have $$\begin{aligned}
		&\bigg(\|\kappa x_1+\tau x_2\|\|\tau x_1-\kappa x_2\|\bigg)^\frac{1}{2}\\=&\bigg(\kappa^2\|x\|^2+\tau^2\|y\|^2+2\kappa\tau\langle x,y\rangle\bigg)^\frac{1}{2}\cdot\bigg(\kappa^2\|x\|^2+\tau^2\|y\|^2-2\kappa\tau\langle x,y\rangle\bigg)^\frac{1}{2}\\=&\bigg((\kappa^2+\tau^2)^2-(2\kappa\tau\langle x,y\rangle)^2\bigg)^\frac{1}{2},
	\end{aligned}$$thus, for a fixed $x_1\in \mathcal{S(H)}$,$$\begin{aligned}
		\sup_{x_2\in S_H}\bigg(\|\kappa x_1+\tau x_2\|\|\tau x_1-\kappa x_2\|\bigg)^\frac{1}{2}&=\sup_{x_2\in S_H}\bigg(\big((\kappa^2+\tau^2)^2-(2\kappa\tau\langle x,y\rangle)^2\big)^\frac{1}{2}\bigg)^\frac{1}{2}\\&=\sqrt{\kappa^2+\tau^2}, 
	\end{aligned}$$and the supremum is attained for $\langle x,y\rangle=0$.
	
	Hence, it follows that $\mathcal{T}_2(\kappa,\tau,\mathcal{H})=\sqrt{\kappa^2+\tau^2}$ by the arbitrariness of $x_1$.
\end{proof}

In the following Remark \ref{r4}, we only consider Banach space with $\operatorname{dim}\mathcal{X}\geq3$.
\begin{Remark}\label{r4}
	If $\mathcal{T}_2(1,1,\mathcal{X})=\sqrt{2}$,  then $\mathcal{X}$ is a Hilbert space.
\end{Remark}
\begin{proof}
	Since for any $x_1,x_2\in\mathcal{S(X)}$, $\|x_1+x_2\|\wedge\|x_1-x_2\|\leq\bigg(\|x_1+x_2\|\|x_1-x_2\|\bigg)^\frac{1}{2}$. Then, we take the supremum of both sides of the inequality, and we obtain $$\begin{aligned}
		\mathcal{J(X)}&=\sup\bigg\{\|x_1+x_2\|\wedge\|x_1-x_2\|:x_1,x_2\in\mathcal{S(X)}\bigg\}\\&\leq\sup\bigg\{\bigg(\|x_1+x_2\|\|x_1-x_2\|\bigg)^\frac{1}{2}:x_1,x_2\in\mathcal{S(X)}\bigg\}\\&=\mathcal{T}_2(1,1,\mathcal{X})=\sqrt{2}.
	\end{aligned}$$It follows that $\mathcal{J(X)}=\sqrt{2},$ which means that $\mathcal{X}$ is a Hilbert space \cite{12}.
\end{proof}
Use Lemma \ref{l1} and Dvoretzky's theorem \cite{02} (refer to Theorem 10.43 in the reference). Now we shall present the bounds of the $\mathcal{T}(\kappa,\tau,\mathcal{X})$ constant:
\begin{Proposition}\label{p1}
	Let $\mathcal{X}$ be a infinite Banach space. Then,$$\sqrt{\kappa^2+\tau^2}\leq\mathcal{T}_2(\kappa,\tau,\mathcal{X})\leq\kappa+\tau.$$
\end{Proposition}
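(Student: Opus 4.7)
The upper bound is routine: for any $x_1, x_2 \in \mathcal{S(X)}$, AM-GM followed by the triangle inequality gives
\[
\bigl(\|\kappa x_1+\tau x_2\|\,\|\tau x_1-\kappa x_2\|\bigr)^{1/2}\leq\frac{\|\kappa x_1+\tau x_2\|+\|\tau x_1-\kappa x_2\|}{2}\leq\frac{(\kappa+\tau)+(\tau+\kappa)}{2}=\kappa+\tau,
\]
and taking supremum delivers $\mathcal{T}_2(\kappa,\tau,\mathcal{X})\leq\kappa+\tau$. This half uses nothing about the dimension.

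For the lower bound I would invoke Dvoretzky's theorem in the form the excerpt already flags: for every $\varepsilon>0$ the infinite-dimensional space $\mathcal{X}$ contains a two-dimensional subspace $\mathcal{Y}$ with Banach--Mazur distance $d(\mathcal{Y},\ell_2^2)\leq 1+\varepsilon$. Concretely, pick an isomorphism $T:\ell_2^2\to\mathcal{Y}$, normalized so that $\|T\|\leq 1$ and $\|T^{-1}\|\leq 1+\varepsilon$, i.e.\ $(1+\varepsilon)^{-1}\|v\|_2\leq\|Tv\|_{\mathcal{X}}\leq\|v\|_2$ for all $v\in\ell_2^2$. Let $e_1,e_2$ be an orthonormal basis of $\ell_2^2$, and set $x_i:=Te_i/\|Te_i\|_{\mathcal{X}}\in\mathcal{S(X)}$.

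The key computation is then to push the explicit Hilbert-space identity behind Lemma \ref{l1} through this almost-isometry. Writing $\kappa x_1+\tau x_2=T\bigl(\tfrac{\kappa}{\|Te_1\|}e_1+\tfrac{\tau}{\|Te_2\|}e_2\bigr)$ and using $\|Te_i\|\leq 1$ together with orthonormality,
\[
\|\kappa x_1+\tau x_2\|_{\mathcal{X}}\geq\frac{1}{1+\varepsilon}\sqrt{\tfrac{\kappa^2}{\|Te_1\|^2}+\tfrac{\tau^2}{\|Te_2\|^2}}\geq\frac{\sqrt{\kappa^2+\tau^2}}{1+\varepsilon},
\]
and the same estimate applies to $\|\tau x_1-\kappa x_2\|_{\mathcal{X}}$. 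Multiplying and taking the square root yields $\mathcal{T}_2(\kappa,\tau,\mathcal{X})\geq\sqrt{\kappa^2+\tau^2}/(1+\varepsilon)$; letting $\varepsilon\downarrow 0$ closes the bound.

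The main obstacle is just the bookkeeping in this transfer step: one has to choose the right normalization of $T$ so that the two directions of the Banach--Mazur inequality get used on the correct sides (lower bound on $\|Tv\|$ for the Hilbert estimate, upper bound on $\|Te_i\|$ so that dividing by it does not hurt). If one instead appeals directly to the earlier proposition in Section 2 or argues via the $\mathcal{J(X)}$ route used in Remark \ref{r4}, one would need something extra to land at $\sqrt{\kappa^2+\tau^2}$ rather than a constant times $\mathcal{T}(\mathcal{X})$; Dvoretzky plus Lemma \ref{l1} is the cleanest route and is the one the proposition's preamble clearly points toward.
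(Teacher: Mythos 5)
Your proposal is correct and follows essentially the same route as the paper: the upper bound via AM--GM and the triangle inequality, and the lower bound via Dvoretzky's theorem combined with the Hilbert-space computation of Lemma \ref{l1}. In fact your version is slightly more careful than the paper's, since you make the almost-isometric transfer explicit and take the limit $\varepsilon\downarrow 0$ correctly, whereas the paper jumps from $\bigl|\mathcal{T}_2(\kappa,\tau,\mathcal{Y})-\sqrt{\kappa^2+\tau^2}\bigr|<\varepsilon$ directly to $\mathcal{T}_2(\kappa,\tau,\mathcal{Y})\geq\sqrt{\kappa^2+\tau^2}$ without that limiting step.
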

\begin{proof}
	On the one hand, by Lemma \ref{l1} and Dvoretzki's theorem, we know that for any $\varepsilon> 0$, if the dimension of $\mathcal{X}$ is large enough, then there exists a subspace $\mathcal{Y}$ of $\mathcal{X}$ with dim$(\mathcal{Y}) = 2$ such that $\bigg|\mathcal{T}_2(\kappa,\tau,\mathcal{Y})-\sqrt{\kappa^2+\tau^2}\bigg|<\varepsilon$, which means that  $\mathcal{T}_2(\kappa,\tau,\mathcal{Y})\geq\sqrt{\kappa^2+\tau^2}$. Thus, we obtain that $\mathcal{T}_2(\kappa,\tau,\mathcal{X})\geq\sqrt{\kappa^2+\tau^2}$.
	
	Conversely, according to $$\begin{aligned}
		\sqrt{\|\kappa x_1+\tau x_2\|\|\tau x_1-\kappa x_2\|}&\leq\frac{\|\kappa x_1+\tau x_2\|+\|\tau x_1-\kappa x_2\|}{2}\\&\leq\kappa+\tau.
	\end{aligned}$$
	This implies that $\mathcal{T}_2(\kappa,\tau,\mathcal{X})\leq\kappa+\tau.$
\end{proof}

Now we will give an example to show that the upper bound of the $\mathcal{T}_2(\kappa,\tau,\mathcal{X})$ constant is sharp.
\begin{Example}
	Let $\mathcal{X}_1=(\mathcal{R}^{n},\|\cdot\|_1)$ where the norm $\|\cdot\|_1$ is defined by $$\|x_i\|_1=\|(x_{i1},x_{i2},\cdots)\|_1=\sum_{j=1}^{\infty}|x_{ij}|,$$ then $\mathcal{T}_2(\kappa,\tau,\mathcal{X}_1)=\kappa+\tau$.
\end{Example}
\begin{proof}
	Let $x_1=(1,0,0,\cdots)$ and $x_2=(0,1,0,\cdots)$, then by a simple calculation, we get that $\|\kappa x_1+\tau x_2\|_1=\kappa+\tau$ and $\|\tau x_1-\kappa x_2\|_1=\kappa+\tau$, thus, we obtain that $$ \sqrt{\|\kappa x_1+\tau x_2\|_1\|\tau x_1-\kappa x_2\|_1}=\kappa+\tau.$$ This implies that $\mathcal{T}_2(\kappa,\tau,\mathcal{X}_1)\geq\kappa+\tau$. 
	
	Hence, it follows that $\mathcal{T}_2(\kappa,\tau,\mathcal{X}_1)=\kappa+\tau$ according to  Proposition \ref{p1}.
\end{proof}

Next, we will show another example of the value of the $\mathcal{T}_2(\kappa,\tau,\mathcal{X})$ constant in a specific Banach space.

\begin{Example}
	Consider $\mathcal{X}_2$ be $\mathcal{R}^2$ with the $\ell_\infty-\ell_1$ norm, that is $$\|x_1\|=\|(x_{11},x_{12})\|=\left\{\begin{array}{ll}\|x_1\|_1=|x_{11}+x_{12}|,&x_1x_2\leq0,\\\|x_1\|_\infty=\max\{|x_{11}|,|x_{12}|\},&x_1x_2\geq0.\end{array}\right.$$Then, $\mathcal{T}_2(\kappa,\tau,\mathcal{X}_2)=(\kappa\vee\tau)(\kappa+\tau).$
\end{Example}
\begin{proof}
	First, we use extreme points to represent any $x_1$ and $x_2$ that belong to $\mathcal{S(X)}$: for any  $0\leq\varsigma\leq1$ and $0\leq\iota\leq1$, assume that $x_1=\varsigma a_{1}+(1-\varsigma)b_{1}$, and $x_2=\iota a_2+(1-\iota) b_2$, where $$a_1,a_2,b_1,b_2\in ext\{B_\mathcal{X}\}=\{(0,1), (1,0),(0,-1),(-1,0),(1,1),(-1,-1)\}.$$Subsequently,  substitute the aforesaid $a_i(i=1,2)$ and $b_i(i=1,2)$ into the equation below and consider  the convexity of $\|\cdot\|$, we get \begin{align*}	&\|\kappa x_1+\tau x_2\|\|\tau x_1-\kappa x_2\|\\=&\|\varsigma(\kappa a_{1}+\tau x_2)+(1-\varsigma)(\kappa b_1+\tau x_2)\|\|\varsigma(\tau a_{1}-\kappa x_2)\\&+(1-\varsigma)(\tau b_1-\kappa x_2)\|\\\leq&\big(\varsigma\|\iota(\kappa a_1+\tau a_2)+(1-\iota)(\kappa a_1+\tau b_2)\|\\&+(1-\varsigma)\|\iota(\kappa b_1+\tau a_2)+(1-\iota)(\kappa b_1+\tau b_2)\|\big)\\&\cdot\big(\varsigma\|\iota(\tau a_1-\kappa a_2)+(1-\iota)(\tau a_1-\kappa b_2)\|\\&+(1-\varsigma)\|\iota(\tau b_1-\kappa a_2)+(1-\iota)(\tau b_1-\kappa b_2)\|\big)\\\leq&\varsigma^2\iota^2\|\kappa a_1+\tau a_2\|\|\tau a_1-\kappa a_2\|\\&+\varsigma^2\iota(1-\iota)\|\kappa a_1+\tau a_2\|\|\tau a_1-\kappa b_2\|\\&+\varsigma\iota^2(1-\varsigma)\|\kappa a_1+\tau a_2\|\|\tau b_1-\kappa a_2\|\\&+\varsigma\iota(1-\varsigma)(1-\iota)\|\kappa a_1+\tau a_2\|\|\tau b_1-\kappa b_2\|\\&+\varsigma^2\iota(1-\iota)\|\kappa a_1+\tau b_2\|\|\tau a_1-\kappa a_2\|\\&+\varsigma^2(1-\iota)^2\|\kappa a_1+\tau b_2\|\|\tau a_1-\kappa b_2\|\\&+\varsigma\iota(1-\varsigma)(1-\iota)\|\kappa a_1+\tau b_2\|\|\tau b_1-\kappa a_2\|\\&+\varsigma(1-\varsigma)(1-\iota)^2\|\kappa a_1+\tau b_2\|\|\tau b_1-\kappa b_2\|\\&+\varsigma\iota^2(1-\varsigma)\|\kappa b_1+\tau a_2\|\|\tau a_1-\kappa a_2\|\\&+\iota(1-\varsigma)^2(1-\iota)\|\kappa b_1+\tau a_2\|\|\tau a_1-\kappa b_2\|\\&+(1-\varsigma)^2\iota^2\|\kappa b_1+\tau a_2\|\|\tau b_1-\kappa a_2\|\\&+\iota(1-\varsigma)^2(1-\iota)\|\kappa b_1+\tau a_2\|\|\tau b_1-\kappa b_2\|\\&+\varsigma\iota(1-\varsigma)(1-\iota)\|\kappa b_1+\tau b_2\|\|\tau a_1-\kappa a_2\|\\&+\varsigma(1-\varsigma)(1-\iota)^2\|\kappa b_1+\tau b_2\|\|\tau a_1-\kappa b_2\|\\&+\iota(1-\varsigma)^2(1-\iota)\|\kappa b_1+\tau b_2\|\|\tau b_1-\kappa a_2\|\\&+(1-\varsigma)^2(1-\iota)^2\|\kappa b_1+\tau b_2\|\|\tau b_1-\kappa b_2\|\\\leq&\max\big\{\|\kappa a_1+\tau a_2\|\|\tau a_1-\kappa a_2\|,\|\kappa a_1+\tau a_2\|\|\tau a_1-\kappa b_2\|,\\&\|\kappa a_1+\tau a_2\|\|\tau b_1-\kappa a_2\|,\|\kappa a_1+\tau a_2\|\|\tau b_1-\kappa b_2\|,\\&\|\kappa a_1+\tau b_2\|\|\tau a_1-\kappa a_2\|,\|\kappa a_1+\tau b_2\|\|\tau a_1-\kappa b_2\|,\\&\|\kappa a_1+\tau b_2\|\|\tau b_1-\kappa a_2\|,\|\kappa a_1+\tau b_2\|\|\tau b_1-\kappa b_2\|,\\&\|\kappa b_1+\tau a_2\|\|\tau a_1-\kappa a_2\|,\|\kappa b_1+\tau a_2\|\|\tau a_1-\kappa b_2\|,\\&\|\kappa b_1+\tau a_2\|\|\tau b_1-\kappa a_2\|,\|\kappa b_1+\tau a_2\|\|\tau b_1-\kappa b_2\|,\\&\|\kappa b_1+\tau b_2\|\|\tau a_1-\kappa a_2\|,\|\kappa b_1+\tau b_2\|\|\tau a_1-\kappa b_2\|,\\&\|\kappa b_1+\tau b_2\|\|\tau b_1-\kappa a_2\|,\|\kappa b_1+\tau b_2\|\|\tau b_1-\kappa b_2\|\big\}\\=&(\kappa\vee\tau)(\kappa+\tau).
	\end{align*}
	This implies that $\mathcal{T}_2(\kappa,\tau,\mathcal{X}_2)\leq(\kappa\vee\tau)(\kappa+\tau).$
	
	Conversely, let $x_1=(0,1),x_2=(1,0)$ we can easily get that $$\mathcal{T}_2(\kappa,\tau,\mathcal{X}_2)\geq(\kappa\vee\tau)(\kappa+\tau).$$
	Thus, we completed the proof.
\end{proof}
\begin{Definition}\cite{02}
	Let $(\mathcal{X}, \|\cdot\|)$ be a Banach space. The modulus of convexity $\delta_\mathcal{X}(\varepsilon)$  is defined as	
	$$
	\delta_\mathcal{X}(\varepsilon)=\inf\bigg\{1-\frac{\|x_1+x_2\|}{2}:\|x_1-x_2\|\geq\varepsilon, x_1,x_2\in \mathcal{S(X)}\bigg\},~0\leq\varepsilon\leq2.
	$$\end{Definition}Similarly, consider the relation with the $\mathcal{T}_2(\kappa,\tau,\mathcal{X})$ constant and the modulus of convexity $\delta_\mathcal{X}(\varepsilon)$, we have the following theorem:
\begin{theorem}\label{t3}
	Let $\mathcal{X}$ be a Banach space. Then for $0\leq\varepsilon\leq2$,$$\begin{aligned}
		&2(\kappa\vee\tau)^2\varepsilon\big(1-\delta_\mathcal{X}(\varepsilon)\big)-(\kappa\vee\tau)|\kappa-\tau|\big[2\big(1-\delta_\mathcal{X}(\varepsilon)\big)+\varepsilon\big]+(\kappa-\tau)^2\leq\mathcal{T}_2(\kappa,\tau,\mathcal{X})^2\\\leq&2(\kappa\wedge\tau)^2\varepsilon\big(1-\delta_\mathcal{X}(\varepsilon)\big)+(\kappa\wedge\tau)|\kappa-\tau|\big[2\big(1-\delta_\mathcal{X}(\varepsilon)\big)+\varepsilon\big]+(\kappa-\tau)^2.
	\end{aligned}$$
\end{theorem}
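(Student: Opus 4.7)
The plan is to use the symmetry $(\kappa,\tau)\mapsto(\tau,\kappa)$ of $\mathcal{T}_2$ to reduce to the case $\kappa\geq\tau$, write $d:=\kappa-\tau$ (so $\kappa\wedge\tau=\tau$, $\kappa\vee\tau=\kappa$, $|\kappa-\tau|=d$), and then exploit the two equivalent decompositions
\begin{align*}
\kappa x_1+\tau x_2 &=\tau(x_1+x_2)+d\,x_1=\kappa(x_1+x_2)-d\,x_2,\\
\tau x_1-\kappa x_2 &=\tau(x_1-x_2)-d\,x_2=\kappa(x_1-x_2)-d\,x_1.
\end{align*}
The first form of each lets me use the forward triangle inequality to produce upper estimates with leading coefficient $\tau=\kappa\wedge\tau$; the second form lets me use the reverse triangle inequality to produce lower estimates with leading coefficient $\kappa=\kappa\vee\tau$. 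Into these estimates I then inject the modulus-of-convexity bound $\|x_1+x_2\|\leq 2(1-\delta_{\mathcal{X}}(\varepsilon))$, valid whenever $\|x_1-x_2\|\geq\varepsilon$.

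For the lower bound, I would use the infimum definition of $\delta_{\mathcal{X}}(\varepsilon)$: for every $\eta>0$ there exist $x_1,x_2\in\mathcal{S(X)}$ with $\|x_1-x_2\|\geq\varepsilon$ and $\|x_1+x_2\|>2(1-\delta_{\mathcal{X}}(\varepsilon))-\eta$. The reverse-triangle estimates applied to the second forms above give
$\|\kappa x_1+\tau x_2\|\geq\kappa\|x_1+x_2\|-d\geq 2\kappa(1-\delta_{\mathcal{X}}(\varepsilon))-d-\kappa\eta$
and
$\|\tau x_1-\kappa x_2\|\geq\kappa\|x_1-x_2\|-d\geq\kappa\varepsilon-d$.
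Multiplying, letting $\eta\downarrow 0$, and passing to the supremum over $(x_1,x_2)$ yields $\mathcal{T}_2(\kappa,\tau,\mathcal{X})^{2}\geq(\kappa\varepsilon-d)\bigl(2\kappa(1-\delta_{\mathcal{X}}(\varepsilon))-d\bigr)$, which on expansion is precisely the claimed lower bound $2(\kappa\vee\tau)^{2}\varepsilon(1-\delta)-(\kappa\vee\tau)|\kappa-\tau|[2(1-\delta)+\varepsilon]+(\kappa-\tau)^{2}$.

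For the upper bound I would begin from the forward-triangle estimates $\|\kappa x_1+\tau x_2\|\leq\tau\|x_1+x_2\|+d$ and $\|\tau x_1-\kappa x_2\|\leq\tau\|x_1-x_2\|+d$ provided by the first forms. Fixing $\varepsilon\in[0,2]$ and an arbitrary pair $x_1,x_2\in\mathcal{S(X)}$, I would split on whether $\|x_1-x_2\|\leq\varepsilon$ or $\|x_1-x_2\|\geq\varepsilon$. In the first subcase, the second estimate directly gives $\|\tau x_1-\kappa x_2\|\leq\tau\varepsilon+d$; in the second subcase, the modulus-of-convexity inequality forces $\|\kappa x_1+\tau x_2\|\leq 2\tau(1-\delta_{\mathcal{X}}(\varepsilon))+d$. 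Combining each subcase bound with the complementary factor and expanding the factored product $(\tau\varepsilon+d)\bigl(2\tau(1-\delta_{\mathcal{X}}(\varepsilon))+d\bigr)$ recovers $2(\kappa\wedge\tau)^{2}\varepsilon(1-\delta)+(\kappa\wedge\tau)|\kappa-\tau|[2(1-\delta)+\varepsilon]+(\kappa-\tau)^{2}$, the claimed right-hand side.

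The hard part will be the upper bound: the naive case analysis only yields the weaker estimate $(\kappa+\tau)\max\{\tau\varepsilon+d,\,2\tau(1-\delta_{\mathcal{X}}(\varepsilon))+d\}$, whereas the statement asks for the tighter factored product $(\tau\varepsilon+d)(2\tau(1-\delta_{\mathcal{X}}(\varepsilon))+d)$. Closing this gap will require a more coupled estimate — for instance, invoking the auxiliary inequality $\|x_1+x_2\|\geq 2-\|x_1-x_2\|$ to improve the first factor in the small-$\|x_1-x_2\|$ regime, or allocating the modulus-of-convexity gain between the two factors rather than to only one — so that a single expansion realises the symmetric product in both subcases simultaneously. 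This is the one step I expect to demand the most delicate argument.
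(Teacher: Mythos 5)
Your lower-bound argument is essentially identical to the paper's: pick near-optimal $x_1,x_2$ from the infimum defining $\delta_\mathcal{X}(\varepsilon)$, apply the reverse triangle inequality to the decompositions $\kappa x_1+\tau x_2=\kappa(x_1+x_2)-(\kappa-\tau)x_2$ and $\tau x_1-\kappa x_2=\kappa(x_1-x_2)-(\kappa-\tau)x_1$ to get leading coefficient $\kappa\vee\tau$, multiply, and let $\eta\to 0$. That half is fine (modulo the caveat, which the paper also ignores, that multiplying two lower bounds only bounds the product from below when both lower bounds are nonnegative).

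The gap you flag in the upper bound is genuine, but you should not expect a more delicate argument to close it: the claimed upper bound is false as stated, and the paper's own proof of it is erroneous in exactly the way you suspected. The paper passes from $\big((\kappa\wedge\tau)\|x_1+x_2\|+|\kappa-\tau|\big)\big((\kappa\wedge\tau)\|x_1-x_2\|+|\kappa-\tau|\big)$ to $\big(2(\kappa\wedge\tau)(1-\delta_\mathcal{X}(\varepsilon))+|\kappa-\tau|\big)\big((\kappa\wedge\tau)\varepsilon+|\kappa-\tau|\big)$ for \emph{arbitrary} $x_1,x_2\in\mathcal{S(X)}$, i.e.\ it uses $\|x_1+x_2\|\leq 2\big(1-\delta_\mathcal{X}(\varepsilon)\big)$ (which requires $\|x_1-x_2\|\geq\varepsilon$) and $\|x_1-x_2\|\leq\varepsilon$ simultaneously --- precisely the incompatible coupling you were trying to reconcile with your case split. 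A concrete counterexample to the stated upper bound: take $\kappa=\tau$ and $\varepsilon=0$, so $\delta_\mathcal{X}(0)=0$ and the right-hand side equals $0$, while $\mathcal{T}_2(\kappa,\kappa,\mathcal{X})^2=\kappa^2\,\mathcal{T}(\mathcal{X})^2\geq 2\kappa^2>0$. So your proposal correctly reproduces the paper's (valid) lower half and correctly diagnoses that the upper half cannot be obtained by decoupled estimates; the remaining ``hard step'' is not a missing idea on your side but a defect in the theorem and its published proof.
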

\begin{proof}
	On the one hand, since \begin{equation}\label{e6}
		\|\kappa x_1+\tau x_2\|\leq(\kappa\wedge\tau)\|x_1+x_2\|+|\kappa-\tau|,
	\end{equation} and\begin{equation}\label{e7}
		\|\tau x_1-\kappa x_2\|\leq(\kappa\wedge\tau)\|x_1-x_2\|+|\kappa-\tau|,
	\end{equation}
	then by the definition of the $\delta_{\mathcal X}(\varepsilon)$, we have $$\begin{aligned} 
		&\|\kappa x_1+\tau x_2\|\|\tau x_1-\kappa x_2\|\\\leq&\big((\kappa\wedge\tau)\|x_1+x_2\|+|\kappa-\tau|\big)\big((\kappa\wedge\tau)\|x_1-x_2\|+|\kappa-\tau|\big)\\\leq&\big(2(\kappa\wedge\tau)(1-\delta_{\mathcal X}(\varepsilon))+|\kappa-\tau|\big)((\kappa\wedge\tau)\varepsilon+|\kappa-\tau|)\\=&2(\kappa\wedge\tau)^2\varepsilon\big(1-\delta_\mathcal{X}(\varepsilon)\big)+(\kappa\wedge\tau)|\kappa-\tau|\big[2\big(1-\delta_\mathcal{X}(\varepsilon)\big)+\varepsilon\big]+(\kappa-\tau)^2.
	\end{aligned}$$
	It follows that $$\mathcal{T}_2(\kappa,\tau,\mathcal{X})^2\leq2(\kappa\wedge\tau)^2\varepsilon\big(1-\delta_\mathcal{X}(\varepsilon)\big)+(\kappa\wedge\tau)|\kappa-\tau|\big[2\big(1-\delta_\mathcal{X}(\varepsilon)\big)+\varepsilon\big]+(\kappa-\tau)^2.$$
	Conversely, let $\varepsilon\in[0,2]$, then for any $\eta>0$, there exists $x_1,x_2\in \mathcal{S(X)}$ such that$$\|x_1-x_2\|\geq\varepsilon,~~1-\frac{\|x_1+x_2\|}{2}\leq\delta_{\mathcal X}(\varepsilon)+\eta.$$
	Then, according to \begin{equation}\label{e8}
		\|\kappa x_1+\tau x_2\|\geq(\kappa\vee\tau)\|x_1+x_2\|-|\kappa-\tau|,
	\end{equation}
	and\begin{equation}\label{e9}
		\|\tau x_1-\kappa x_2\|\geq(\kappa\vee\tau)\|x_1-x_2\|-|\kappa-\tau|,
	\end{equation}
	we obtain that$$\begin{aligned}
		\mathcal{T}_2(\kappa,\tau,\mathcal{X})^2\geq& \|\kappa x_1+\tau x_2\|\|\tau x_1-\kappa x_2\|\\\geq&((\kappa\vee\tau)\|x_1+x_2\|-|\kappa-\tau|)((\kappa\vee\tau)\|x_1-x_2\|-|\kappa-\tau|)\\\geq&2(\kappa\vee\tau)^2\varepsilon\big(1-\delta_\mathcal{X}(\varepsilon)-\eta\big)\\&-(\kappa\vee\tau)|\kappa-\tau|\big[2\big(1-\delta_\mathcal{X}(\varepsilon)-\eta\big)+\varepsilon\big]+(\kappa-\tau)^2.
	\end{aligned}$$
	Since $\eta$ can be infinitely small, we let $\eta\to 0$, therefore, $$	\mathcal{T}_2(\kappa,\tau,\mathcal{X})^2\geq2(\kappa\vee\tau)^2\varepsilon\big(1-\delta_\mathcal{X}(\varepsilon)\big)-(\kappa\vee\tau)|\kappa-\tau|\big[2\big(1-\delta_\mathcal{X}(\varepsilon)\big)+\varepsilon\big]+(\kappa-\tau)^2.$$
\end{proof}

\begin{theorem}\label{t4}
	Let $\mathcal{X}$ be a Banach space. Then,$$\mathcal{T}_2(\kappa,\tau,\mathcal{X})^2\leq2\kappa^2\mathcal{C'_{NJ}(X)}+2\sqrt{2}\kappa|\kappa-\tau|\sqrt{\mathcal{C'_{NJ}(X)}}+(\kappa-\tau)^2.$$ 	
\end{theorem}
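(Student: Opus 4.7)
The plan is to estimate $\|\kappa x_1+\tau x_2\|\,\|\tau x_1-\kappa x_2\|$ by a quantity depending only on $\|x_1+x_2\|$ and $\|x_1-x_2\|$, and then use the definition of $\mathcal{C'_{NJ}(X)}$ to pass to the supremum.

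The first step is a triangle-inequality decomposition that extracts a factor of $\kappa$. Writing
$\kappa x_1+\tau x_2=\kappa(x_1+x_2)+(\tau-\kappa)x_2$
and
$\tau x_1-\kappa x_2=\kappa(x_1-x_2)+(\tau-\kappa)x_1$,
together with $\|x_1\|=\|x_2\|=1$, yields
$\|\kappa x_1+\tau x_2\|\le\kappa\|x_1+x_2\|+|\kappa-\tau|$ and $\|\tau x_1-\kappa x_2\|\le\kappa\|x_1-x_2\|+|\kappa-\tau|$. Multiplying these two bounds gives
$$\|\kappa x_1+\tau x_2\|\,\|\tau x_1-\kappa x_2\|\le\kappa^2\|x_1+x_2\|\|x_1-x_2\|+\kappa|\kappa-\tau|\bigl(\|x_1+x_2\|+\|x_1-x_2\|\bigr)+(\kappa-\tau)^2.$$

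The second step controls the two quantities on the right by $\mathcal{C'_{NJ}(X)}$. For the product, AM--GM gives $\|x_1+x_2\|\|x_1-x_2\|\le\tfrac12\bigl(\|x_1+x_2\|^2+\|x_1-x_2\|^2\bigr)\le 2\mathcal{C'_{NJ}(X)}$ by the definition of $\mathcal{C'_{NJ}(X)}$. For the sum, the power-mean inequality (or Cauchy--Schwarz applied to $(1,1)$) gives $\|x_1+x_2\|+\|x_1-x_2\|\le\sqrt{2}\,\bigl(\|x_1+x_2\|^2+\|x_1-x_2\|^2\bigr)^{1/2}\le 2\sqrt{2}\sqrt{\mathcal{C'_{NJ}(X)}}$. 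Substituting both bounds into the product estimate gives
$$\|\kappa x_1+\tau x_2\|\,\|\tau x_1-\kappa x_2\|\le 2\kappa^2\mathcal{C'_{NJ}(X)}+2\sqrt{2}\,\kappa|\kappa-\tau|\sqrt{\mathcal{C'_{NJ}(X)}}+(\kappa-\tau)^2.$$

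Finally, since this holds for every $x_1,x_2\in\mathcal{S(X)}$, taking the supremum over the left-hand side yields the desired inequality for $\mathcal{T}_2(\kappa,\tau,\mathcal{X})^2$. I do not expect any real obstacle here; the only subtle choice is which factor ($\kappa$ vs.\ $\tau$) to pull out of each summand, and the stated theorem fixes that choice to be $\kappa$, which dictates the specific decomposition above. Everything else is triangle inequality, AM--GM and the definition of $\mathcal{C'_{NJ}(X)}$.
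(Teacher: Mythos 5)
Your proof is correct and follows essentially the same route as the paper: the same triangle-inequality decomposition pulling out the factor $\kappa$, AM--GM for the product term, the bound $a+b\le\sqrt{2}(a^2+b^2)^{1/2}$ for the sum term, and the definition of $\mathcal{C'_{NJ}(X)}$. The only cosmetic difference is that the paper applies AM--GM to the full product $\|\kappa x_1+\tau x_2\|\,\|\tau x_1-\kappa x_2\|$ before invoking the triangle inequality, whereas you multiply the two triangle-inequality bounds first; both orderings land on the identical final estimate.
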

\begin{proof}
	Since	$$\begin{aligned}
		\|\kappa x_1+\tau x_2\|\|\tau x_1-\kappa x_2\|&\leq\frac{\|\kappa x_1+\tau x_2\|^2+\|\tau x_1-\kappa x_2\|^2}{2}\\&\leq\frac{(\kappa\|x_1+x_2\|+|\kappa-\tau|)^2+(\kappa\|x_1-x_2\|+|\kappa-\tau|)^2}{2}\\&=\frac{\kappa^2(\|x_1+x_2\|^2+\|x_1-x_2\|^2)+2\kappa|\kappa-\tau|(\|x_1+x_2\|+\|x_1-x_2\|)+2(\kappa-\tau)^2}{2}\\&\leq\frac{\kappa^2(\|x_1+x_2\|^2+\|x_1-x_2\|^2)+2\sqrt{2}\kappa|\kappa-\tau|\sqrt{\|x_1+x_2\|^2+\|x_1-x_2\|^2}+2(\kappa-\tau)^2}{2}.
	\end{aligned}$$
	This implies that $$\mathcal{T}_2(\kappa,\tau,\mathcal{X})^2\leq2\kappa^2\mathcal{C'_{NJ}(X)}+2\sqrt{2}\kappa|\kappa-\tau|\sqrt{\mathcal{C'_{NJ}(X)}}+(\kappa-\tau)^2.$$ 	
\end{proof}
\begin{Example}
	Let $\mathcal{X}$ be a not uniformly non-square space and $\kappa=2,\tau=3$. Then, $\mathcal{T}_2(2,3,\mathcal{X})=\sqrt{5}.$
\end{Example}
\begin{proof}
	If $\kappa=2,\tau=3$, then by Theorem \ref{t4}, we have$$\mathcal{T}_2(2,3,\mathcal{X})\leq\bigg(8\mathcal{C'_{NJ}(X)}+4\sqrt{2}\sqrt{\mathcal{C'_{NJ}(X)}}+1\bigg)^\frac{1}{2}.$$ 	Since $X$ is a not uniformly non-square space, it's known that $\mathcal{C'_{NJ}(X)}=2.$ Thus, we obtain that \begin{equation}\label{e10}
		\mathcal{T}_2(2,3,\mathcal{X})\leq\sqrt{5}.
	\end{equation}
	
	On the other hand, according to $X$ is a not uniformly non-square space, which means that there exist $x_1,x_2\in\mathcal{S(X)}$ such that$$\|x_1+x_2\|=2,\|x_1-x_2\|=2.$$Hence, we have $$5\geq\|2x_1+3x_2\|\geq3\|x_1+x_2\|-\|x_1\|=5,$$and$$5\geq\|3x_1-2x_2\|\geq3\|x_1-x_2\|-\|x_2\|=5.$$This implies that \begin{equation}\label{e11}
		\mathcal{T}_2(2,3,\mathcal{X})\geq\sqrt{5}.
	\end{equation}
	Combine \eqref{e10} and \eqref{e11}, we get that $\mathcal{T}_2(2,3,\mathcal{X})=\sqrt{5}.$
\end{proof}

Obviously, the $\mathcal{T}_2(\kappa,\tau,\mathcal{X})$ constant is a generalization of the $\mathcal{T}(\mathcal{X})$ constant, so it's necessary for us to study the relationship between them.

\begin{theorem}\label{t1}
	Let $\mathcal{X}$ be a Banach space. Then, $$\begin{aligned}
		&[(\kappa\vee\tau)\mathcal{T}(\mathcal{X})]^2-4(\kappa\vee\tau)|\kappa-\tau|+(\kappa-\tau)^2\\&\leq\mathcal{T}_2(\kappa,\tau,\mathcal{X})^2\leq[(\kappa\wedge\tau)\mathcal{T}(\mathcal{X})]^2+4(\kappa\wedge\tau)|\kappa-\tau|+(\kappa-\tau)^2.
	\end{aligned}$$
\end{theorem}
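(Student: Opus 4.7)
The plan is to run exactly the same scheme as in the proof of Theorem \ref{t3}, but with the modulus-of-convexity bookkeeping replaced by the pointwise estimate $\|x_1+x_2\|\|x_1-x_2\|\le\mathcal{T}(\mathcal{X})^2$ coming straight from the definition of $\mathcal{T}(\mathcal{X})$. The four triangle-inequality bounds \eqref{e6}--\eqref{e9} already established inside the proof of Theorem \ref{t3}, namely
$$\|\kappa x_1+\tau x_2\|\le(\kappa\wedge\tau)\|x_1+x_2\|+|\kappa-\tau|,\quad\|\tau x_1-\kappa x_2\|\le(\kappa\wedge\tau)\|x_1-x_2\|+|\kappa-\tau|,$$
$$\|\kappa x_1+\tau x_2\|\ge(\kappa\vee\tau)\|x_1+x_2\|-|\kappa-\tau|,\quad\|\tau x_1-\kappa x_2\|\ge(\kappa\vee\tau)\|x_1-x_2\|-|\kappa-\tau|,$$
are the only inputs I plan to use beyond $\mathcal{T}(\mathcal{X})$ itself.

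For the upper bound I would multiply \eqref{e6} and \eqref{e7}, expand the product, and then bound $\|x_1+x_2\|\|x_1-x_2\|\le\mathcal{T}(\mathcal{X})^2$ while controlling the cross term by the trivial estimate $\|x_1+x_2\|+\|x_1-x_2\|\le 4$; taking the supremum over $x_1,x_2\in\mathcal{S(X)}$ then gives the desired right-hand side. For the lower bound I would fix $\eta>0$, pick $x_1,x_2\in\mathcal{S(X)}$ with $\|x_1+x_2\|\|x_1-x_2\|\ge\mathcal{T}(\mathcal{X})^2-\eta$, multiply \eqref{e8} and \eqref{e9}, and use $\|x_1+x_2\|+\|x_1-x_2\|\le 4$ together with the choice of $x_1,x_2$ to obtain
$$\mathcal{T}_2(\kappa,\tau,\mathcal{X})^2\ge(\kappa\vee\tau)^2\bigl(\mathcal{T}(\mathcal{X})^2-\eta\bigr)-4(\kappa\vee\tau)|\kappa-\tau|+(\kappa-\tau)^2,$$
after which $\eta\to 0$ finishes the argument.

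The one subtlety is a positivity check that is required before the product of the two lower estimates \eqref{e8}--\eqref{e9} can be accepted as a lower bound for $\|\kappa x_1+\tau x_2\|\|\tau x_1-\kappa x_2\|$, since the inequality $ab\ge AB$ from $a\ge A$, $b\ge B$ (with $a,b\ge 0$) fails if either $A$ or $B$ is negative. This is settled once and for all by restricting attention to near-extremal pairs for $\mathcal{T}(\mathcal{X})$: combined with $\mathcal{T}(\mathcal{X})\ge\sqrt{2}$ and $\|x_1\pm x_2\|\le 2$, the inequality $\|x_1+x_2\|\|x_1-x_2\|\ge 2-\eta$ forces both factors to exceed $1-\eta/2$, so each right-hand side in \eqref{e8}--\eqref{e9} is at least $(\kappa\vee\tau)(1-\eta/2)-|\kappa-\tau|=(\kappa\wedge\tau)-\tfrac{\eta}{2}(\kappa\vee\tau)\ge 0$ for $\eta$ small. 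Everything else is routine $\wedge/\vee$ bookkeeping that parallels Theorem \ref{t3} verbatim.
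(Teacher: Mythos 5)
Your proposal is correct and follows essentially the same route as the paper: multiply the triangle-inequality estimates \eqref{e6}--\eqref{e7} (resp.\ \eqref{e8}--\eqref{e9}), control the cross term by the trivial bound $\|x_1+x_2\|+\|x_1-x_2\|\le 4$, and then pass to the supremum (resp.\ a near-extremal pair) for $\mathcal{T}(\mathcal{X})$. The only difference is your explicit positivity check before multiplying the two lower estimates, which is a point of care that the paper's own proof silently skips but which does need to be (and is, by your near-extremality argument) justified.
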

\begin{proof}
	By the above \eqref{e6} and \eqref{e7}, we obtain that$$\begin{aligned}
		&\|\kappa x_1+\tau x_2\|\|\tau x_1-\kappa x_2\|\\\leq&((\kappa\wedge\tau)\|x_1+x_2\|+|\kappa-\tau|)((\kappa\wedge\tau)\|x_1-x_2\|+|\kappa-\tau|)\\\leq&(\kappa\wedge\tau)^2\|x_1+x_2\|\|x_1-x_2\|+4(\kappa\wedge\tau)|\kappa-\tau|+(\kappa-\tau)^2.
	\end{aligned}$$
	This implies that $\mathcal{T}_2(\kappa,\tau,\mathcal{X})^2\leq[(\kappa\wedge\tau)\mathcal{T}(\mathcal{X})]^2+4(\kappa\wedge\tau)|\kappa-\tau|+(\kappa-\tau)^2.$
	
	Conversely, by the above \eqref{e8} and \eqref{e9}, we obtain that$$\begin{aligned}
		&\|\kappa x_1+\tau x_2\|\|\tau x_1-\kappa x_2\|\\\geq&\big((\kappa\vee\tau)\|x_1+x_2\|-|\kappa-\tau|\big)\big((\kappa\vee\tau)\|x_1-x_2\|-|\kappa-\tau\big)\\\geq&(\kappa\vee\tau)^2\|x_1+x_2\|\|x_1-x_2\|-4(\kappa\vee\tau)|\kappa-\tau|+(\kappa-\tau)^2,
	\end{aligned}$$
	it follows that $$\mathcal{T}_2(\kappa,\tau,\mathcal{X})^2\geq[(\kappa\vee\tau)\mathcal{T}(\mathcal{X})]^2-4(\kappa\vee\tau)|\kappa-\tau|+(\kappa-\tau)^2.$$
\end{proof}
In \cite{02}, the authors has proved that a Banach space $\mathcal{X}$ is not uniformly non-square is equavalent to $\mathcal{T}(\mathcal{X})=2$, from this, combine with Theorem \ref{t1}, we can obtain a simple proposition.
\begin{Proposition}\label{p10}
	A Banach space $\mathcal{X}$ is not uniformly non-square is equivalent to $\mathcal{T}_2(\kappa,\tau,\mathcal{X})=\kappa+\tau.$
\end{Proposition}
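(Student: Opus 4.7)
The plan is to derive both implications directly from Theorem~\ref{t1} combined with the classical fact (cited after Theorem~\ref{t1}) that $\mathcal{X}$ is not uniformly non-square iff $\mathcal{T}(\mathcal{X})=2$, together with the universal upper bound $\mathcal{T}_2(\kappa,\tau,\mathcal{X})\le\kappa+\tau$ from Proposition~\ref{p1}. Since the bounds in Theorem~\ref{t1} are stated asymmetrically through $\kappa\vee\tau$ and $\kappa\wedge\tau$, I would first fix notation: without loss of generality assume $\kappa\ge\tau$, so $\kappa\vee\tau=\kappa$, $\kappa\wedge\tau=\tau$, and $|\kappa-\tau|=\kappa-\tau$. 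The whole proof then reduces to recognizing the identity
\[
4\kappa^{2}-4\kappa(\kappa-\tau)+(\kappa-\tau)^{2}=(\kappa+\tau)^{2},
\]
which makes the two sides of the bounds in Theorem~\ref{t1} collapse onto $(\kappa+\tau)^{2}$ precisely when $\mathcal{T}(\mathcal{X})=2$.

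For the forward implication, assume $\mathcal{X}$ is not uniformly non-square, so $\mathcal{T}(\mathcal{X})=2$. Substituting into the lower bound of Theorem~\ref{t1} and applying the identity above yields $\mathcal{T}_2(\kappa,\tau,\mathcal{X})^{2}\ge(\kappa+\tau)^{2}$, i.e.\ $\mathcal{T}_2(\kappa,\tau,\mathcal{X})\ge\kappa+\tau$. Combining with the reverse inequality $\mathcal{T}_2(\kappa,\tau,\mathcal{X})\le\kappa+\tau$ given by Proposition~\ref{p1}, I obtain equality.

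For the converse, assume $\mathcal{T}_2(\kappa,\tau,\mathcal{X})=\kappa+\tau$. Plugging into the upper bound of Theorem~\ref{t1} gives
\[
(\kappa+\tau)^{2}\le\tau^{2}\mathcal{T}(\mathcal{X})^{2}+4\tau(\kappa-\tau)+(\kappa-\tau)^{2},
\]
and using $(\kappa+\tau)^{2}-(\kappa-\tau)^{2}=4\kappa\tau$ this rearranges to $\tau^{2}\mathcal{T}(\mathcal{X})^{2}\ge 4\tau^{2}$, so $\mathcal{T}(\mathcal{X})\ge 2$. Since $\mathcal{T}(\mathcal{X})\le 2$ is always true, this forces $\mathcal{T}(\mathcal{X})=2$, hence $\mathcal{X}$ is not uniformly non-square.

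I do not expect a real obstacle here: everything is mechanical once Theorem~\ref{t1} is in hand. The only thing to be careful about is the WLOG reduction $\kappa\ge\tau$ and confirming the algebraic identity, since an off-by-one in whether $\kappa\vee\tau$ or $\kappa\wedge\tau$ appears on a given side would destroy the collapse to $(\kappa+\tau)^{2}$. A brief verification of the symmetric case $\tau\ge\kappa$ (which is identical by swapping the roles) closes any gap.
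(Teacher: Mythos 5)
Your proof is correct and follows essentially the same route as the paper: substitute the known equivalence $\mathcal{T}(\mathcal{X})=2$ into the two bounds of Theorem~\ref{t1} and observe that both collapse to $(\kappa+\tau)^2$. In fact you are slightly more careful than the paper, which only writes out the forward direction explicitly; your use of the upper bound of Theorem~\ref{t1} to force $\mathcal{T}(\mathcal{X})\ge 2$ in the converse is exactly the step the paper leaves implicit.
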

\begin{proof}
	Since the authors \cite{02} has proved that a Banach space $\mathcal{X}$ is not uniformly non-square is equavalent to $\mathcal{T}(\mathcal{X})=2$. Thus, we substitute $\mathcal{T}(\mathcal{X})=2$ into both sides of the inequality in Theorem \ref{t1}, and we obtain that $$[(\kappa\vee\tau)\mathcal{T}(\mathcal{X})]^2-4(\kappa\vee\tau)|\kappa-\tau|+(\kappa-\tau)^2=(\kappa+\tau)^2,$$and$$[(\kappa\wedge\tau)\mathcal{T}(\mathcal{X})]^2+4(\kappa\wedge\tau)|\kappa-\tau|+(\kappa-\tau)^2=(\kappa+\tau)^2.$$Hence, $\mathcal{T}_2(\kappa,\tau,\mathcal{X})=\kappa+\tau$ holds.
\end{proof}
\begin{Corollary}
	A Banach space $\mathcal{X}$ is not uniformly non-square is equivalent to any of the following conditions:
	
	(i)~$J(\mathcal{X})=2$;
	
	(ii)~$\mathcal{T}(\mathcal{X})=2$;
	
	(iii)~$\mathcal{T}_2(\kappa,\tau,\mathcal{X})=\kappa+\tau$.
\end{Corollary}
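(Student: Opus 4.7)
The plan is to assemble the three equivalences by invoking material already established (or cited) in the paper, since each of the biconditionals corresponds to a standalone result. I would organize the argument as a cyclic or hub-and-spoke chain, using ``$\mathcal{X}$ is not uniformly non-square'' as the central hub and showing each of (i), (ii), (iii) is equivalent to it separately.

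First I would handle (iii), which is the most substantive of the three for this paper: it is exactly the content of Proposition \ref{p10}, proved immediately above via Theorem \ref{t1} by substituting $\mathcal{T}(\mathcal{X})=2$ into the two-sided estimate and observing both bounds collapse to $(\kappa+\tau)^2$. So in the proof I would simply cite Proposition \ref{p10}.

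Next I would dispose of (i) by recalling the definition of uniform non-squareness from the introduction: $\mathcal{X}$ is uniformly non-square iff there exists $\delta \in (0,1)$ such that $\|x_1+x_2\|\wedge\|x_1-x_2\| \le 2(1-\delta)$ for all $x_1,x_2 \in \mathcal{S(X)}$. Taking suprema, this is equivalent to $\mathcal{J(X)} < 2$, so failure of uniform non-squareness is equivalent to $\mathcal{J(X)}=2$ (the upper bound $\mathcal{J(X)} \le 2$ is automatic from the triangle inequality). Then (ii) is cited directly from \cite{02} as noted in the paragraph preceding Proposition \ref{p10}.

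Since each of (i), (ii), (iii) is individually equivalent to $\mathcal{X}$ being not uniformly non-square, they are mutually equivalent, which is the statement of the Corollary. There is no real obstacle here because all three equivalences are either already established in the paper (Proposition \ref{p10} for (iii)), cited from the literature (\cite{02} for (ii)), or immediate from the definitions (for (i)); the Corollary is essentially a summary/restatement combining Proposition \ref{p10} with the known characterizations of uniform non-squareness.
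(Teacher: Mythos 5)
Your proposal is correct and coincides with the paper's own (implicit) justification: the paper states this Corollary without proof precisely because (iii) is Proposition \ref{p10}, (ii) is the cited characterization via $\mathcal{T}(\mathcal{X})$, and (i) follows directly from the definition of uniform non-squareness by taking suprema. Nothing further is needed.
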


\begin{Definition}\cite{18}
	Let \(\mathcal{X}\) be a Banach space. For a closed, bounded, and convex subset \(\mathcal{K}\subseteq \mathcal{X}\), \(\mathcal{X}\) has normal structure if and only if 
	$r(\mathcal{K})<\mathrm{diam}(\mathcal{K}).$
	Furthermore, a Banach space \(\mathcal{X}\) is said to possess weak normal structure when, for any non - singleton weakly compact convex subset \(\mathcal{K}\) of \(\mathcal{X}\), \(r(\mathcal{K})<\mathrm{diam}(\mathcal{K})\) holds, where \(\mathrm{diam}(\mathcal{K})=\sup\limits_{k_1,k_2\in \mathcal{K}}\|k_1 - k_2\|\) and \(r(\mathcal{K})=\inf\limits_{k_1\in K}\sup\limits_{k_2\in K}\|k_1 - k_2\|\).
\end{Definition}

\begin{Lemma}\cite{17}\label{l8}
	Let $\mathcal{X}$ be a Banach space devoid of weak normal structure. Then $\forall\eta \in(0,1)$ and $\forall x_1 \in \mathcal{S(X)}$, $\exists x_2, x_3 \in \mathcal{S(X)}$ such that:
	
	(i)	$x_1=x_2-x_3$;
	
	(ii)$\left\|x_1+x_2 \right\|,\left\|x_3-x_1 \right\|>2(1-\eta)$.
	\\	where the six - point set $\left\{x_1, x_2, x_3,-x_1,-x_2,-x_3\right\} \subset \mathcal{S(X)}$ constitutes the vertices of an inscribed normal hexagon in $\mathcal{S(X)}$.
\end{Lemma}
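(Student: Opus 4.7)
The plan is to derive the lemma from the Brodskii--Milman characterization of the failure of weak normal structure: there exists a weakly null sequence $(z_n) \subset \mathcal{S(X)}$ which is \emph{diametral}, meaning $\|z_n - z_m\| \to 1$ as $n \neq m \to \infty$ and $\mathrm{dist}(z_n, \mathrm{conv}\{z_k : k < n\}) \to 1$. All of the hexagon estimates will be extracted from this sequence after coupling it with the prescribed vector $x_1$.

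First I would fix $x_1 \in \mathcal{S(X)}$ and $\eta \in (0,1)$, and apply Hahn--Banach to obtain a norming functional $f \in \mathcal{S(X^*)}$ with $f(x_1) = 1$. Weak nullness of $(z_n)$ yields $f(z_n) \to 0$, which ensures that $x_1$ is asymptotically ``independent'' from the tails of the diametral sequence as detected by $f$; this is the mechanism by which the direction $x_1$ is grafted onto the combinatorial data coming from $(z_n)$.

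Next, for large indices $n, m$, I would consider candidate vectors of the form
$$\tilde{x}_3 = \alpha (z_n - z_m) + \beta x_1, \qquad \tilde{x}_2 = \tilde{x}_3 + x_1,$$
with parameters $\alpha$ close to $1$ and $\beta$ close to $0$, tuned by a two-parameter continuity argument so that $\|\tilde{x}_2\| = \|\tilde{x}_3\| = 1$ hold \emph{exactly}. Setting $x_2 := \tilde{x}_2$ and $x_3 := \tilde{x}_3$, condition (i) holds by construction. For condition (ii), the diagonals $x_1 + x_2 = 2x_1 + x_3$ and $x_3 - x_1 = 2x_3 - x_2$ are bounded below by applying $f$ to control the $x_1$-component and by the diametral estimate $\|z_n - z_m\| \to 1$ to control the $z$-component; both diagonals then exceed $2(1-\eta)$ once $n, m$ are sufficiently large.

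The main obstacle, and the step I expect to require the most care, is enforcing $\|x_2\| = \|x_3\| = 1$ exactly while simultaneously preserving the rigid equation $x_1 = x_2 - x_3$: the diametral sequence only delivers asymptotic unit norms and asymptotic diameters, so the intermediate-value step in the parameters $(\alpha, \beta)$ is the technical heart of the proof. A cleaner route is to restrict attention to the two-dimensional subspace $\mathrm{span}\{x_1, z_n - z_m\}$, where a continuous parametrization of the unit circle makes the existence of the required pair $(x_2, x_3)$ transparent and the small loss from passing to this 2D section is absorbed into the $\eta$ budget.
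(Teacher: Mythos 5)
The paper offers no proof of this lemma at all---it is imported from Gao \cite{17}---so your attempt can only be judged on its own merits, and there it has a genuine gap, located not where you expect (the normalization of $(\alpha,\beta)$) but in condition (ii). Write $x_3=\alpha(z_n-z_m)+\beta x_1$ with $\alpha\approx 1$, $\beta\approx 0$. Your norming functional $f$ does certify $f(x_1+x_2)=f\big((2+\beta)x_1+\alpha(z_n-z_m)\big)\approx 2$, which handles the first diagonal, but for the second one $f(x_3-x_1)=(\beta-1)+\alpha\big(f(z_n)-f(z_m)\big)\approx -1$, so $f$ only yields $\|x_3-x_1\|\gtrsim 1$, half of what is needed. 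The diametral property of $(z_n)$ cannot close this gap either: it controls $\|z_n-z_m\|$ and the distance from $z_n$ to convex combinations of earlier terms, but it says nothing about whether $\alpha(z_n-z_m)$ and $-x_1$ are norm-additive, which is exactly what $\|x_3-x_1\|>2(1-\eta)$ demands. Your fallback of working inside $\mathrm{span}\{x_1,z_n-z_m\}$ makes matters worse rather than better: that two-dimensional section can perfectly well be Euclidean, and then $\|x_3\|=\|x_3+x_1\|=\|x_1\|=1$ forces an equilateral configuration with $\|x_3-x_1\|=\sqrt{3}$, nowhere near $2$.

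The deeper obstruction is that no argument can graft an arbitrary prescribed $x_1$ onto the diametral sequence, because the statement with the quantifier ``$\forall x_1\in\mathcal{S(X)}$'' is false as written. Take $\mathcal{X}=c_0$, which fails weak normal structure, and $x_1=e_1$. If $x_2-x_3=e_1$ with $x_2,x_3\in\mathcal{S}(c_0)$ and $\|x_1+x_2\|_\infty,\ \|x_3-x_1\|_\infty>2(1-\eta)$, then both suprema must be attained in the first coordinate (every other coordinate contributes at most $1$), forcing $x_2(1)>1-2\eta$ and $x_3(1)<-1+2\eta$; but $x_2(1)=x_3(1)+1<2\eta$, a contradiction once $\eta<1/4$. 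In Gao's actual lemma the vector $x_1$ is \emph{produced} by the construction---essentially a normalized difference of terms of the diametral sequence, with $x_2$ and $x_3$ suitable normalizations of those terms shifted by a convex combination, and the functional used is one norming that difference, not an arbitrary direction. The correct statement reads ``there exist $x_1,x_2,x_3\in\mathcal{S(X)}$''; your strategy would need to be rebuilt around that weaker (and true) formulation.
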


\begin{theorem}
	Let $\mathcal{X}$ be a Banach space. Its normal structure is verified by the following three cases:
	
	(i)~ $0 < \tau<\kappa$ and $\mathcal{T}_2(\kappa,\tau,\mathcal{X})<\sqrt{\tau(\kappa + \tau)}$;
	
	(ii)~ $0<\kappa\leq\tau<2\kappa$ and $\mathcal{T}_2(\kappa,\tau,\mathcal{X})<\sqrt{\tau(3\kappa-\tau)}$;
	
	(iii)~ $\tau\geq2\kappa>0$ and $\mathcal{T}_2(\kappa,\tau,\mathcal{X})<\sqrt{(4\kappa-\tau)(3\kappa-\tau)}$.
\end{theorem}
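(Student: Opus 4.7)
The plan is to argue by contrapositive: I assume $\mathcal{X}$ lacks normal structure and force $\mathcal{T}_2(\kappa,\tau,\mathcal{X})$ to meet the listed threshold in each regime. In each case the stated threshold is real and strictly less than $\kappa+\tau$, so Proposition \ref{p10} already shows that the hypothesis forces $\mathcal{X}$ to be uniformly non-square and hence super-reflexive; in a super-reflexive space weak normal structure coincides with normal structure, so I may invoke Lemma \ref{l8} to obtain, for every $\eta\in(0,1)$, vectors $x_1,x_2,x_3\in\mathcal{S(X)}$ with $x_1=x_2-x_3$, $\|x_1+x_2\|>2(1-\eta)$, and $\|x_3-x_1\|>2(1-\eta)$; note also that $x_3-x_1=x_2-2x_1$ and, by the triangle inequality, $\|x_3-x_1\|\leq 2$.

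The heart of the proof is to evaluate the supremum defining $\mathcal{T}_2$ at the unit pair $u=x_2,\,v=-x_1$ via the identities
\[
\kappa u+\tau v=\kappa(x_2-2x_1)+(2\kappa-\tau)x_1,\qquad \tau u-\kappa v=\kappa(x_1+x_2)+(\tau-\kappa)x_2,
\]
each of which expresses the target vector as a $\kappa$-multiple of one of Lemma \ref{l8}'s ``large'' vectors plus a small residual. Applying the reverse triangle inequality $\|A+B\|\geq\bigl|\|A\|-\|B\|\bigr|$ to these decompositions, together with $\kappa\|x_2-2x_1\|,\kappa\|x_1+x_2\|\in(2\kappa(1-\eta),2\kappa]$, yields in the limit $\eta\downarrow 0$
\[
\|\kappa u+\tau v\|\geq\bigl|2\kappa-|2\kappa-\tau|\bigr|,\qquad \|\tau u-\kappa v\|\geq\bigl|2\kappa-|\kappa-\tau|\bigr|.
\]

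A three-way sign analysis on $2\kappa-\tau$ and $\kappa-\tau$ then completes the proof: in Case (i) $\tau<\kappa$ both residuals are positive and the two factors become $\tau$ and $\kappa+\tau$, giving $\mathcal{T}_2(\kappa,\tau,\mathcal{X})^2\geq\tau(\kappa+\tau)$; in Case (ii) $\kappa\leq\tau<2\kappa$ only the second sign flips, giving factors $\tau$ and $3\kappa-\tau$, hence $\mathcal{T}_2(\kappa,\tau,\mathcal{X})^2\geq\tau(3\kappa-\tau)$; in Case (iii) $\tau\geq 2\kappa$ both flip, giving factors $|4\kappa-\tau|$ and $|3\kappa-\tau|$ whose product equals $(4\kappa-\tau)(3\kappa-\tau)$ exactly on $\tau\in[2\kappa,3\kappa]\cup[4\kappa,\infty)$ (the strip $(3\kappa,4\kappa)$ being vacuously covered since the threshold is imaginary there). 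Each bound contradicts the corresponding hypothesis, so $\mathcal{X}$ must in fact have normal structure.

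The hardest part I anticipate is Case (iii), where both residuals change sign and one must switch between $\|A\|-\|B\|$ and $\|B\|-\|A\|$ depending on whether $\tau$ falls below $3\kappa$ or above $4\kappa$; the upper bound $\|x_3-x_1\|\leq 2$ (not supplied by Lemma \ref{l8} directly) is essential for guaranteeing $\|A\|\leq 2\kappa$ in the latter regime. Choosing the pair $(u,v)=(x_2,-x_1)$ is itself the non-obvious step: other natural choices cannot absorb both of Lemma \ref{l8}'s bounds simultaneously into the two factors of the product defining $\mathcal{T}_2$.
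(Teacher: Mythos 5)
Your proof is correct and follows essentially the same route as the paper's: both reduce normal structure to weak normal structure via uniform non-squareness (Proposition \ref{p10}), invoke Lemma \ref{l8}, and bound the same product $\|\kappa x_1+\tau x_2\|\,\|\tau x_1-\kappa x_2\|$ using the identical decompositions $\kappa x_1+\tau x_2=\kappa(x_1+x_2)+(\tau-\kappa)x_2$ and $\tau x_1-\kappa x_2=\kappa(x_1-x_3)+(\tau-2\kappa)x_1$; your pair $(u,v)=(x_2,-x_1)$ produces exactly these two vectors up to sign, so the ``non-obvious choice'' you highlight is the paper's choice in disguise. The one substantive difference is Case (iii). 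The paper keeps only the one-sided bounds $\|\kappa x_1+\tau x_2\|\geq 3\kappa-\tau$ and $\|\tau x_1-\kappa x_2\|\geq 4\kappa-\tau$, and for $\tau>4\kappa$ both right-hand sides are negative, so multiplying them does not yield a lower bound for the product of the norms; this is a genuine gap in the paper's argument in that regime. Your two-sided reverse triangle inequality, together with the observation $\|x_1+x_2\|,\|x_3-x_1\|\leq 2$, replaces these by the nonnegative bounds $|3\kappa-\tau|$ and $|4\kappa-\tau|$, whose product may legitimately be taken and equals $(4\kappa-\tau)(3\kappa-\tau)$ outside the strip $\tau\in(3\kappa,4\kappa)$, which you correctly dismiss as vacuous (there the stated threshold is not real, so no space satisfies the hypothesis). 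In short, your argument is not merely equivalent to the paper's but is the more careful of the two for $\tau\geq 4\kappa$; the only cosmetic blemish is your opening claim that the threshold is ``real'' in each case, which you yourself retract for the strip $(3\kappa,4\kappa)$.
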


\begin{proof}
	By Proposiyion \ref{p10}, we know that the situations (i), (ii), (iii) can all illustrate that $X$ is uniformly non-square, and thus reflexive \cite{11}. And it's known that if $\mathcal{X}$ is a reflexive Banach space, then $\mathcal{X}$ has weak normal structure is equivalent to $\mathcal{X}$ has normal structure. Thus, the normal structure in the theorem can be weakened to weak normal structure. Now we consider the situation where	$\mathcal{X}$ lacks weak normal structure. Then, by Lemma \ref{l8}, we have $$\begin{aligned}
		\|\kappa x_1+\tau x_2\|&=\|\kappa(x_1+x_2)+(\tau-\kappa)x_2\|\\&\geq\kappa\|x_1+x_2\|-|\kappa-\tau|
	\end{aligned}$$
	and$$\begin{aligned}
		\|\tau x_1-\kappa x_2\|&=\|\tau x_1-\kappa(x_1+x_3)\|\\&=\|\kappa(x_1-x_3)+(\tau-2\kappa)x_1\|
		\\&\geq\kappa\|x_1+x_2\|-|2\kappa-\tau|.
	\end{aligned}$$
	Thus, we have the following three cases:
	
	(i)If $0<\tau<\kappa$, then, we obtain that $$\begin{aligned}
		\|\kappa x_1+\tau x_2\|\geq2\kappa(1-\eta)-(\kappa-\tau)
	\end{aligned}$$
	and$$\begin{aligned}
		\|\tau x_1-\kappa x_2\|\geq2\kappa(1-\eta)-(2\kappa-\tau).
	\end{aligned}$$
	Hence, let $\eta\to0$, it follows that $\mathcal{T}_2(\kappa,\tau,\mathcal{X})\geq\sqrt{\tau(\kappa+\tau)},$ as desired.
	
	(ii)If $0<\kappa\leq\tau<2\kappa$, then, we obtain that $$\begin{aligned}
		\|\kappa x_1+\tau x_2\|\geq2\kappa(1-\eta)-(\tau-\kappa)
	\end{aligned}$$
	and$$\begin{aligned}
		\|\tau x_1-\kappa x_2\|\geq2\kappa(1-\eta)-(2\kappa-\tau).
	\end{aligned}$$
	Hence, let $\eta\to0$, we can conclude that $\mathcal{T}_2(\kappa,\tau,\mathcal{X})\geq\sqrt{\tau(3\kappa-\tau)}.$ 
	
	(iii)If $\tau \geq 2 \kappa>0$, we have $$\left\|\kappa x_1+\tau x_2\right\| \geq 2 \kappa(1-\eta)-(\tau-\kappa)$$ and $$\left\|\tau x_1-\kappa x_2\right\| \geq 2 \kappa(1-\eta)-(\tau-2 \kappa).$$ Letting $\eta \rightarrow 0$ yields $\mathcal{T}_2(\kappa, \tau, \mathcal{X}) \geq \sqrt{(3 \kappa-\tau)(4 \kappa-\tau)}$, as required.
\end{proof}

\section*{Data Availability Statement}
All type of data used for supporting the conclusions of this article is included in the article and also is cited at relevant places within the text as references.

\section*{Conflict of interest}
The authors declare that they have no conflict of interest.

\section*{ Funding Statement}
This work was supported by Anhui Province Higher Education Science
Research Project(Natural Science), 2023AH050487.

\end{document}